  \newtheorem{theorem}{Theorem}[section]
  \newtheorem{lemma}[theorem]{Lemma}
  \newtheorem{definition}[theorem]{Definition}
  \newtheorem{example}[theorem]{Example}
  \newtheorem{corollary}[theorem]{Corollary}
  \newtheorem{remark}[theorem]{Remark}
\newcommand{\nnrm}[1]{{\left\vert\kern-0.25ex\left\vert\kern-0.25ex\left\vert #1 
		\right\vert\kern-0.25ex\right\vert\kern-0.25ex\right\vert}}
\begin{document}


\title{Computation of structured stability radii for Dissipative-Hamiltonian systems}

\author[$\dagger$]{Peter Benner}
\affil[$\dagger$]{Max Planck Institute for Dynamics of Complex Technical Systems, 39106 Magdeburg, Germany. \authorcr%
  \email{benner@mpi-magdeburg.mpg.de}, \orcid{0000-0003-3362-4103}}
  
\author[$\ddagger$]{Volker Mehrmann}
\affil[$\ddagger$]{Institute of Mathematics, TU Berlin, 10623, Berlin, Germany. \authorcr%
  \email{mehrmann@math.tu-berlin.de}, \orcid{0000-0001-5051-2870}}
  
\author[$\dagger$]{Anshul Prajapati}
\affil[$\dagger$]{Max Planck Institute for Dynamics of Complex Technical Systems, 39106 Magdeburg, Germany.\authorcr%
  \email{prajapati@mpi-magdeburg.mpg.de}, \orcid{0000-0002-7641-5304
}}
  
\author[$\mathsection$]{Punit Sharma}
\affil[$\mathsection$]{Department of Mathematics, Indian Institute of Technology Delhi, New Delhi-110016, India. \authorcr%
  \email{punit.sharma@maths.iitd.ac.in}, \orcid{0000-0003-4922-8877}\begin{center}\Large{
 In memory of Nicholas J. Higham.\vspace{-1em} }
\end{center} }


\shorttitle{STRUCTURED STABILITY RADII}
\shortauthor{P. BENNER ET AL. 
}
\shortdate{}


\keywords{dissipative Hamiltonian system, port-Hamiltonian system, structured stability radius, distance to instability, Rayleigh quotient, optimally robust representation}

\msc{93D20, 93D09, 65F15}

\abstract{%
    We study linear time-invariant Dissipative Hamiltonian (DH) systems  arising in energy-based modeling of dynamical systems.
    An advantage of DH systems is that they are always stable due to the structure of their coefficient matrices, and, under further weak conditions, even asymptotically stable. In this paper, we discuss the computation of the stability radii for a given asymptotically stable DH system; i.e., the smallest structured perturbation that puts a DH system on the boundary of the region of asymptotic stability, so that it has purely imaginary eigenvalues. We obtain explicit computable formulas for various structured stability radii. For this, the problem of computing stability radii is reformulated in terms of minimizing the Rayleigh quotient of a Hermitian matrix or the sum of two generalized Rayleigh quotients of Hermitian semidefinite matrices. This reformulation results in the problem of minimizing the largest eigenvalue of an eigenvector-dependent Hermitian matrix or minimizing the smallest eigenvalue of a Hermitian matrix which depends on the eigenvector.  It is also demonstrated (via numerical experiments) that, under structure-preserving perturbations, the asymptotic stability of a DH system is much more robust than under general perturbations, since the distance to instability is typically much larger when structure-preserving perturbations are considered.
   Finally, similar results are obtained for optimally robust representations of stable systems.
	}

 \novelty{The key contributions of our work are the following:
	\begin{itemize}
		\item We derive computable formulas for structured stability radii of DH systems under various structure-preserving perturbations, in terms of optimizing a function involving two generalized Rayleigh quotients or optimizing the largest eigenvalue of an eigenvector-dependent Hermitian matrix.
		\item Motivated by~\cite{LuPSB25}, a nonlinear eigenvalue problem where the coefficient matrix depends on the eigenvector, is obtained for the optimization involving two generalized Rayleigh quotients.
		\item Several bounds are obtained for various structured stability radii.
		\item Numerical experiments are presented to illustrate the robustness of DH systems under structure-preserving perturbations.
		\item In particular, the robustness under structured perturbations for optimally robust DH representations is considered.
	\end{itemize}
}

\maketitle



\section{Introduction}%
\label{sec:intro}

In this paper, we consider  the (asymptotic) stability analysis and in particular the computation of stability radii for the class of \emph{linear time-invariant port-Hamiltonian (pH)  systems} of the form 
\begin{align*}
    \dot{x}&=(J-R)Qx + (G-P)u,\\
    y&=(G+P)^*Qx + (S+N)u,
\end{align*}
where $x,u$, and $y$ are the state, input, and output of the system. The function $x\mapsto x^*Qx$, with $Q^*=Q$
positive definite, describes the \emph{stored energy} of the system and is called the \emph{Hamiltonian} of the system. The coefficient matrices $J, R, Q\in \C^{n,n}, G , P\in \C^{n,m}, S, N \in\C^{m,m}$, where $\C^{n,n}$ is the set of $n\times n$ complex matrices, satisfy 
\begin{equation*}
    \mat{cc} J & G\\-G^* & N \rix^* = - \mat{cc} J & G\\-G^* & N \rix, \qquad W=W^*=\mat{cc} R & P \\ P^* & S \rix \geq 0,
\end{equation*}
where $W\geq 0$  ($W>0$) denotes that the matrix $W$ is positive semidefinite (definite).

PH systems possess numerous significant geometric and algebraic properties that are nicely encoded in their representation; see~\cite{DalV98,JacZ12,OrtVME02,MehU23}. In this paper, we focus on the property that PH systems are stable, i.e., all eigenvalues of the system matrix $A = (J - R)Q$ are in the closed left half of the complex plane, and all eigenvalues located on the imaginary axis are semisimple, see \cite{MehMS16,MehMW18}. For the stability analysis, the port matrices can be ignored, resulting in a \emph{Dissipative Hamiltonian (DH)} system represented by
\begin{equation}
    \label{DHsystem} \dot{x} = (J - R)Qx,
\end{equation}
where $J$ is skew-Hermitian, $R\geq 0$, $Q>0$.

If the product in \eqref{DHsystem} is multiplied out to create a matrix $A=(J-R)Q$ and the DH structure  is ignored, then the stability of the system is no longer evident from the structure of the coefficients. In this case, to see  if the system is stable, one can either compute the eigenvalues or use the Lyapunov theorem~\cite{LanT85}. If $A$ has purely imaginary eigenvalues, then arbitrarily small perturbations (such as data or roundoff errors) can cause eigenvalues to move to the right half complex plane. This is particularly the case for linear systems that emerge from the linearization of nonlinear systems near stationary reference solutions~\cite{Cam95}, from data-driven realizations (see, e.g.,~\cite{Ant05,MayA07}), or from classical finite element modeling~\cite{GraMQSV16}. Since in practice, the system model experiences perturbations, stability can only be assured when the system maintains a sufficient distance from instability; see~\cite{HinP86,HinP05}. Computing the distance to instability~\cite{Bye88,FreS11,HeW98,Van84} is an optimization problem and usually again subject to perturbations.

The situation is different for DH systems which are automatically stable under perturbations, as long as the DH structure is preserved. However, DH systems are not necessarily asymptotically stable, i.e., they may have purely imaginary eigenvalues. Therefore, it is important to know whether a DH system is simply stable or even asymptotically stable. Knowing whether the system is robustly asymptotically stable is even more important, meaning that small (structured) perturbations maintain its asymptotic stability. The latter again requires that the system has a reasonable distance from a DH system with purely imaginary eigenvalues. To study this question is an important topic in many applications, in particular, in power systems and circuit simulation (see, for example,~\cite{Mar86,MarL90,MarPR07,RomM09}) and multibody systems (see, for example,~\cite{GraMQSV16,Sch90,Ves11}).

\begin{example}\label{example:brakesqueal}\emph{
    Large-scale second-order differential equations of the form
    \begin{equation*}
        M\ddot{x}+(D+G)\dot{x}+(K+N)x = f,
    \end{equation*}
    arise in the finite element analysis of disk brake squeal~\cite{GraMQSV16}, where $M= M^* >0$ is the mass matrix, $D= D^* \geq 0$ represents material and friction-induced damping, $G=-G^*$ represents gyroscopic effects, $K= K^* >0$ represents stiffness, and $N$ is  non-symmetric and accounts for circulatory effects. 
    An appropriate first order formulation is associated with the pencil $\lambda I+ (J-R)Q$, where
    \begin{equation*}\label{eq:brakesqueal}
        J:=\mat{cc} G & (K+\frac{1}{2}N)\\ -(K+\frac{1}{2}N^*) & 0\rix,\qquad R:= \mat{cc} D & \frac{1}{2}N \\ \frac{1}{2}N^* & 0 \rix,\qquad Q:= \mat{cc} M & 0 \\ 0 &K \rix^{-1},
    \end{equation*}
    and $I$ denotes the identity matrix.}
\end{example}

Brake squeal is associated with eigenvalues in the right half-plane. In the absence of circulatory effects, i.e., when $N$ vanishes, then the system is automatically stable, since it is a DH system. One can view the matrix $N$ as a (low-rank) perturbation of a DH system, since in the industrial examples considered in~\cite{GraMQSV16}, the matrix $N$ has a rank of order $2000$ and the size of the system is of the order $1$ million. It is obvious that for $N \neq 0$ the pencil $\lambda I+ (J-R)Q$ is missing one of the essential properties of a DH system, because the matrix $R$ is then indefinite, and thus the system may be unstable, which is the reason for the squeal. To analyze properties of the system~\eqref{DHsystem} when this happens is one of the motivations for our work.

It has been observed already a long time ago that the perturbation bounds under structured perturbation differ significantly from those under unstructured perturbation; see e.g. the landmark paper~\cite{HigH92}. Our work is following this direction of research.

In recent years, there has been an increasing interest in the investigation of the stability radii of dissipative and port-Hamiltonian systems under structure preserving perturbations; see~\cite{AliMM20,BagGS21,MehMS16,MehMS17,MehMW21}. In~\cite{MehMS16}, the stability radii of DH systems were examined by analyzing structure-preserving perturbations to $J$, $R$, and $Q$ individually, resulting in computable formulas. In~\cite{AliMM20}, stability radii approximations for large-scale port-Hamiltonian systems were derived, focusing solely on perturbations in $R$. Additionally, in~\cite{BagGS21}, perturbations to $J$ and $R$ were considered, and a lower bound was derived for the structured stability radii.

In this paper, we revisit the stability radii of DH systems under various structure-preserving perturbations to both $J$ and $R$. We obtain new explicit formulas in terms of optimizing a function involving two generalized Rayleigh quotients or optimizing the largest eigenvalue of an eigenvector-dependent Hermitian matrix. We also study the stability radii of recently developed robust representations of DH systems. 

The paper is organized as follows.
In Section~\ref{sec:prelim}, we define various stability radii under different structure-preserving perturbations. We then recall some preliminary results that will be used to characterize these structured distances. In Section~\ref{sec:radius}, we derive computable formulas for three different stability radii under structure-preserving perturbations to both $J$ and $R$, i.e., skew-Hermitian perturbations  of the form $J+\Delta_J$ and three different types of structured perturbations of the form $R+\Delta_R$, first when $\Delta_R$ is Hermitian negative semidefinite such that $R+\Delta_R$ is Hermitian positive semidefinite, second when $\Delta_R$ is Hermitian such that $R+\Delta_R$ is Hermitian positive semidefinite, and third when $\Delta_R$ is Hermitian. 
Section~\ref{sec:minSRQ2} briefly discusses the minimization of a function involving two generalized Rayleigh quotients via the minimization of a rational function over a joint numerical range. We use this function in computing the structured stability radii under skew-Hermitian perturbations to $J$ and Hermitian negative semidefinite perturbations to $R$ such that $R+\Delta_R \geq 0$. Section~\ref{sec:numerical} presents numerical experiments to demonstrate the results that we have obtained, particularly highlighting that the stability distances under structure-preserving perturbations can substantially differ from those under unstructured  perturbations.

\section{Preliminaries}\label{sec:prelim}
In the following, $\|\cdot\|$ denotes the spectral norm of a vector or a matrix, while $\|\cdot\|_F$ denotes the Frobenius norm of a matrix. By $\Lambda(A)$, we denote the spectrum of a matrix $A \in \C^{n,n}$. 
We use the notation $A \geq 0$ and $A \leq 0$ if $A \in \C^{n,n}$ is Hermitian and positive or  negative semidefinite, respectively, and $A > 0$ if $A$ is Hermitian positive definite.
We denote the identity matrix of size $n$ by $I_n$. For a complex number $z$, $\Im(z)$ and $\Re(z)$, respectively, denote the imaginary and the real part of $z$. We denote by $\sigma_{\text{min}}(A)$  the smallest singular value of a matrix $A$. If $R$ is Hermitian, then $\lambda_{\text{max}}(R)$ and $\lambda_{\text{min}}(R)$ denote its largest and smallest eigenvalue, respectively. 
We will frequently use generalized Rayleigh quotients $\rho(x) := \frac{x^*H_1x}{x^*H_2x}$, with $x\in\C^n\setminus\{0\}$, for positive semidefinite $H_1$ and $H_2\in\C^{n,n}$, 
where we define that
\begin{equation}\label{eq:defrq}
	\rho(x)= \frac{x^*H_1x}{x^*H_2x} :=0, \quad\text{if both $x^*H_1x = 0$ and $x^*H_2x= 0$.}
\end{equation}
By ~\eqref{eq:defrq}, the function $\rho(x)$ is well-defined and lower semi-continuous, i.e., $\liminf_{y\to x}\rho(y)\geq \rho(x)$, for all $x\in\C^n\setminus\{0\}$, so that we can properly define optimization problems involving such Rayleigh quotients.

In the following, we consider different perturbations in the coefficient matrices $J$ and $R$ of a DH system of the form~\eqref{DHsystem}. These take the form
\begin{equation}
    \tilde A=(J+\Delta_J - (R+\Delta_R))Q,
\end{equation}
i.e.\ we do not perform perturbations in the energy matrix $Q$. This is a reasonable assumption as the energy is often provided analytically and is not subject to perturbations in such situations.
In order to measure these perturbations in $J$ and $R$, we consider the following norm on the space $(\C^{n,n})^2$. For a given tuple $(\Delta_J,\Delta_R)\in (\C^{n,n})^2$, this is
\begin{equation}
    \nnrm{(\Delta_J,\Delta_R)}=\sqrt{\|\Delta_J\|^2+\|\Delta_R\|^2}.
\end{equation}
Note that we could also {measure the perturbations in a matrix norm induced by an energy norm weighted by $Q$, i.e. $\|x\|_Q=\sqrt{x^HQx}$.}

For complex unstructured linear systems that are asymptotically stable, the smallest norm of a perturbation that moves an eigenvalue to the imaginary axis is called the \emph{(complex) stability radius}, since arbitrarily small perturbations can then move an eigenvalue to the right half-plane and thus make the system unstable. For real systems, there is also the \emph{real stability radius}, which refers to perturbations that are constrained to be real; see~\cite{HinP90}.

In the case of DH systems, if we preserve the DH structure while performing perturbations, then we may loose asymptotic stability, but the system remains stable. Despite this, the term stability radius has been used in literature; see~\cite{MehMS16}. We follow the terminology in~\cite{MehMS16,MehMS17}, and define the stability radii for DH systems of the form~\eqref{DHsystem} as follows.

\begin{definition}\label{def:radii}
    Consider a DH system of the form~\eqref{DHsystem}. Then the \emph{unstructured stability radius} with respect to arbitrary perturbations in $J$ and $R$ is defined by 
    \begin{equation}
        r(J,R):=\inf \left\{ \nnrm{(\Delta_J,\Delta_R)}~:~\Delta_J,\Delta_R\in \C^{n,n},~ \Lambda((J+\Delta_J - (R+\Delta_R))Q) \cap i\R \neq \emptyset \right\}.
    \end{equation} 
    For structure-preserving perturbations in $J$ and $R$, we consider the following cases.
    \begin{enumerate}
        \item The \emph{stability radius $r^{S_d}(J,R)$} with respect to skew-Hermitian perturbations in $J$ and Hermitian negative semidefinite perturbations in $R$ from the perturbation set
        \begin{equation}\label{eq:setSd}
            S_d(J,R)=\{(\Delta_J,\Delta_R)\in (\C^{n,n})^2~:~ \Delta_J^*=-\Delta_J,~\Delta_R^*=\Delta_R\leq 0, ~ R+\Delta_R\geq0\}
        \end{equation}
        is defined by 
        \begin{equation}\label{eq:radSd}
            r^{S_d}(J,R):=\inf \left\{ \nnrm{(\Delta_J,\Delta_R)} :(\Delta_J,\Delta_R)\in S_d(J,R),~ \Lambda((J+\Delta_J - (R+\Delta_R))Q) \cap i\R \neq \emptyset \right\}.
        \end{equation}
        \item The \emph{stability radius $r^{S_i}(J,R)$} with respect to skew-Hermitian perturbations in $J$ and Hermitian but not necessarily semidefinite perturbations to $R$ from the perturbation set
        \begin{equation}\label{eq:setSi}
            S_i(J,R)=\{(\Delta_J,\Delta_R)\in (\C^{n,n})^2 ~:~ \Delta_J^*=-\Delta_J,~\Delta_R^*=\Delta_R, ~ R+\Delta_R\geq0\}
        \end{equation}
        is defined by
        \begin{equation}\label{eq:radSi}
            r^{S_i}(J,R):=\inf \left\{ \nnrm{(\Delta_J,\Delta_R)} : (\Delta_J,\Delta_R)\in S_i(J,R),~ \Lambda((J+\Delta_J - (R+\Delta_R))Q) \cap i\R \neq \emptyset \right\}.
        \end{equation}
        \item The \emph{eigenvalue backward error $\eta^{S}(J,R,\lambda)$, $\lambda\in \C$} and the \emph{stability radius $r^{S}(J,R)$} with respect to skew-Hermitian perturbations in $J$ and Hermitian perturbations in $R$ from the perturbation set
        \begin{equation}\label{eq:setS}
            S(J,R)=\{(\Delta_J,\Delta_R)\in (\C^{n,n})^2 ~:~ \Delta_J^*=-\Delta_J,~\Delta_R^*=\Delta_R\}
        \end{equation}
        are respectively defined by
        \begin{equation}\label{eq:backerror}
            \eta^{S}(J,R,\lambda):=\inf\left\{\nnrm{(\Delta_J,\Delta_R)}~:~(\Delta_J,\Delta_R)\in S(J,R),~ \lambda \in \Lambda((J+\Delta_J - (R+\Delta_R))Q)\right\}
        \end{equation}
        and
        \begin{equation}\label{eq:radS}
            r^{S}(J,R):=\inf \left\{\nnrm{(\Delta_J,\Delta_R)}~:~(\Delta_J,\Delta_R)\in S(J,R),~ \Lambda((J+\Delta_J - (R+\Delta_R))Q) \cap i\R \neq \emptyset \right\}.
        \end{equation}
    \end{enumerate}
\end{definition}

{Note that the sets $S_d(J,R)$ and $S_i(J,R)$ depend implicitly on the matrix $R$ through the constraint $R+\Delta_R \geq 0$, which determines the admissible perturbations $\Delta_R$. While the perturbation structure itself does not explicitly depend on $J$, we use the notation $S_d(J,R)$ and $S_i(J,R)$ to emphasize that these sets correspond to structured perturbations to the pair $(J,R)$. For the sake of uniformity we use the same notation for the set $S(J,R)$.

The additional constraint $\Delta_R\leq 0$ in $S_d(J,R)$ is motivated by the observation that, under the requirement $R+\Delta_R\geq 0$, the optimal perturbation attaining the distance to instability is generally of rank two. Imposing $\Delta_R\leq 0$ allows for a rank-one characterization of the distance to instability while still preserving the dissipative structure $R+\Delta_R\geq 0$.

In Definition~\ref{def:radii}, if the perturbation in $R$ is further restricted to be of rank one, then we denote this by adding an index $1$, i.e., we write $r_1$ for the corresponding radius.}

The characterization of the stability radius $r(J,R)$ can be obtained by slightly modifying the general approach~\cite{HinP86}, see also~\cite{BagGS21} for an independent derivation.

{
\begin{theorem}\label{thm:unstr}
    Consider an asymptotically stable DH system of the form~\eqref{DHsystem}. Then the unstructured stability radius $r(J,R)$ is finite and is given by
    \begin{equation}
        r(J,R)=\frac{1}{\sqrt{2}}\inf_{w\in \R}\frac{1}{\|G(\omega)\|}.
    \end{equation} 
\end{theorem}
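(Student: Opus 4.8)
The plan is to reduce the two-parameter perturbation to a single combined perturbation and then apply a Hinrichsen--Pritchard-type argument. Observe that the perturbed matrix can be written as $(J+\Delta_J-(R+\Delta_R))Q = ((J-R)+\Delta)Q$ with $\Delta := \Delta_J-\Delta_R$, so the spectral condition depends on $(\Delta_J,\Delta_R)$ only through $\Delta$. I would therefore first solve the splitting problem: for a prescribed $\Delta$, minimize $\nnrm{(\Delta_J,\Delta_R)}^2 = \|\Delta_J\|^2+\|\Delta_R\|^2$ over all decompositions $\Delta_J-\Delta_R = \Delta$. Combining the triangle inequality $\|\Delta\|\le \|\Delta_J\|+\|\Delta_R\|$ with the arithmetic--quadratic mean inequality $(\|\Delta_J\|+\|\Delta_R\|)^2\le 2(\|\Delta_J\|^2+\|\Delta_R\|^2)$ yields the lower bound $\nnrm{(\Delta_J,\Delta_R)}\ge \tfrac{1}{\sqrt2}\|\Delta\|$, and the symmetric choice $\Delta_J=\tfrac12\Delta$, $\Delta_R=-\tfrac12\Delta$ attains it. This isolates the factor $\tfrac{1}{\sqrt2}$ and reduces the problem to minimizing $\tfrac{1}{\sqrt2}\|\Delta\|$ over all $\Delta$ that place an eigenvalue of $((J-R)+\Delta)Q$ on the imaginary axis.

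Next I would characterize, for a fixed frequency $\omega\in\R$, when $i\omega\in\Lambda(((J-R)+\Delta)Q)$. Writing $A:=(J-R)Q$ and using that asymptotic stability makes $i\omega I_n - A$ invertible for every real $\omega$, I factor
\begin{equation*}
	i\omega I_n - ((J-R)+\Delta)Q = (i\omega I_n - A)\bigl(I_n - (i\omega I_n - A)^{-1}\Delta Q\bigr),
\end{equation*}
so that $i\omega$ is an eigenvalue precisely when $\det(I_n - (i\omega I_n-A)^{-1}\Delta Q)=0$. Applying the identity $\det(I-XY)=\det(I-YX)$ then rewrites this condition as $\det(I_n - G(\omega)\Delta)=0$ with $G(\omega)=Q(i\omega I_n - A)^{-1}$, i.e.\ $1\in\Lambda(G(\omega)\Delta)$.

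Finally, for each fixed $\omega$ I would compute the smallest $\|\Delta\|$ subject to $\det(I_n-G(\omega)\Delta)=0$. A small-gain bound shows that $\|\Delta\|<1/\|G(\omega)\|$ forces $\|G(\omega)\Delta\|<1$ and hence the invertibility of $I_n - G(\omega)\Delta$; conversely, taking $\Delta = \tfrac{1}{\|G(\omega)\|}v_1u_1^*$ built from the leading right and left singular vectors $v_1,u_1$ of $G(\omega)$ gives $G(\omega)\Delta = u_1u_1^*$, which has $1$ as an eigenvalue, so the per-frequency minimum equals $1/\|G(\omega)\|$ whenever $G(\omega)\neq 0$. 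Since the splitting optimization and the frequency/$\Delta$ optimization decouple, taking the infimum over $\omega$ yields $r(J,R)=\tfrac{1}{\sqrt2}\inf_{\omega\in\R}1/\|G(\omega)\|$, which is finite exactly when $G(\omega)\neq 0$ for some $\omega$. The main points to be careful about are the clean decoupling of the two nested infima and the rank-one attainability step, namely ensuring that the optimal $\Delta$ is genuinely realizable as a valid split and that the convention~\eqref{eq:defrq} introduces no degeneracy; I would also remark that under $Q>0$ and asymptotic stability the resolvent, and hence $G(\omega)$, is always invertible, so in the present setting the finiteness condition is automatically satisfied.
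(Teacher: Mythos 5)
Your proof is correct and follows essentially the same route as the paper, which does not spell out a proof but explicitly defers to ``slightly modifying the general approach'' of Hinrichsen--Pritchard: your reduction to a single perturbation $\Delta=\Delta_J-\Delta_R$ with the symmetric split producing the factor $1/\sqrt{2}$, followed by the resolvent/determinant identity and the rank-one attainability construction, is precisely that modification. Your closing observation that $G(\omega)=Q(i\omega I_n-(J-R)Q)^{-1}$ is automatically nonzero (indeed invertible) under $Q>0$ and asymptotic stability, so the finiteness condition is vacuous in this setting, is also accurate.
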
}

We will discuss in detail the stability radii $r^{S_d}(J,R)$, $r^{S_i}(J,R)$, and $r^{S}(J,R)$ defined in Definition~\ref{def:radii} 
and will compare them with the stability radius $r(J,R)$. 
In order to do this, we make use of the {following mapping results, which summarize the results from} \cite{BorKMS14,MehMS16} adapted to our setting.
\begin{lemma}\label{map:herm}
    Let $x \in \C^n\setminus\{0\}$ and $y\in \C^{n}$. Then 
    \begin{itemize}
        \item [a)] there exists a Hermitian matrix $H \in \C^{n,n}$ such that $Hx = y$ if and only if $\Im{(x^*y)} = 0$ and we have
            \[
        \min\big\{\|H\|\; : H\in \C^{n,n},\; H^*=H, \; Hx=y\big\} = \frac{\|y\|}{\|x\|},
        \] 
        and the minimum is attained by
        \begin{equation}\label{def:hatH}
            \hat H_{(x,y)} := \frac{\|y\|}{\|x\|}\mat{cc}\frac{y}{\|y\|}&\frac{x}{\|x\|} \rix \mat{cc}\frac{y^*x}{\|x\|\|y\|}&1\\1&\frac{x^*y}{\|x\|\|y\|} \rix^{-1} \mat{cc}\frac{y}{\|y\|}&\frac{x}{\|x\|} \rix^*
        \end{equation}
        if $x$ and $y$ are linearly independent and by $\hat H_{(x,y)}:=\frac{yx^*}{x^*x}$, otherwise;
        \item [b)] there exists a skew-Hermitian matrix $S \in \C^{n,n}$ such that $Sx = y$ if and only if $\Re{(x^*y)} = 0$ and we have
        \[
        \min\big\{\|S\|\; :\; S ^*=-S, \; Sx=y\big\} = \frac{\|y\|}{\|x\|},
        \] 
        and the minimum is attained by $\hat S:=-i\hat H_{(x,iy)}$, where $\hat{H}$ is defined in~\eqref{def:hatH}.
    \end{itemize}
\end{lemma}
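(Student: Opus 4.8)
The plan is to establish part (a) from first principles and then to deduce part (b) by the elementary substitution that converts the skew-Hermitian mapping problem into a Hermitian one.

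For part (a) I would argue in three steps. First, the necessity of $\Im(x^*y)=0$ is immediate: if $H=H^*$ and $Hx=y$, then $x^*y=x^*Hx$ is the Rayleigh quotient of a Hermitian matrix and hence real. Second, I would record the lower bound valid for an \emph{arbitrary} matrix $H$ with $Hx=y$, namely $\|y\|=\|Hx\|\le\|H\|\,\|x\|$, so that $\|H\|\ge\|y\|/\|x\|$; this shows no admissible $H$ can beat $\|y\|/\|x\|$, and it remains only to exhibit a Hermitian matrix attaining it, which simultaneously proves sufficiency of the condition.

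For the attainment I would split into the two cases of the statement. If $x$ and $y$ are linearly dependent, then $y=\alpha x$, and the reality of $x^*y=\alpha\|x\|^2$ forces $\alpha\in\R$ (the case $y=0$ being trivial); one then checks directly that $\hat H_{(x,y)}=yx^*/(x^*x)$ is Hermitian, satisfies $\hat H_{(x,y)}x=y$, and has norm $\|y\|/\|x\|$. If $x$ and $y$ are linearly independent, I would reduce to the two-dimensional subspace $V=\mathrm{span}\{x,y\}$: since we only need to attain the lower bound, it suffices to search for a minimizer that vanishes on $V^\perp$, which turns the task into the minimal-norm Hermitian mapping problem for a $2\times2$ matrix. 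In the orthonormal basis consisting of $x/\|x\|$ and the normalized component of $y$ orthogonal to $x$, the mapping condition $Hx=y$ fixes the first column of this $2\times2$ Hermitian matrix and leaves its lower-right diagonal entry as a free real parameter; I would choose that parameter so that the two eigenvalues become $\pm\|y\|/\|x\|$, which forces the spectral norm down to the lower bound. Translating this optimal $2\times2$ matrix back to the original coordinates produces the closed form $\hat H_{(x,y)}$, and it then remains only to verify by direct computation that the displayed expression is Hermitian (using $\Im(x^*y)=0$), maps $x$ to $y$, and has the stated norm.

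Part (b) I would obtain from (a) through the norm-preserving bijection $S\mapsto H:=iS$ between skew-Hermitian and Hermitian matrices. Since $Sx=y$ is equivalent to $Hx=iy$, applying part (a) to the pair $(x,iy)$ yields existence precisely when $\Im(x^*(iy))=\Re(x^*y)=0$, the minimal norm $\|iy\|/\|x\|=\|y\|/\|x\|$, and the minimizer $\hat H_{(x,iy)}$; undoing the substitution via $\hat S=-i\hat H_{(x,iy)}$ reproduces exactly the stated formula. The main obstacle is the linearly independent case of (a): carrying out the $2\times2$ reduction correctly, justifying that an optimal Hermitian mapping may be taken supported on $\mathrm{span}\{x,y\}$, and matching the balanced-eigenvalue minimizer to the explicit expression $\hat H_{(x,y)}$. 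The remaining steps are short and essentially formal.
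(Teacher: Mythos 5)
The paper never proves this lemma---it is imported wholesale from \cite{MacMT08} (``summarizes results from [MacMT08] adapted to our setting'')---so your self-contained argument is necessarily a different route, and its core is correct; indeed it is essentially the argument behind the cited result. Necessity of $\Im(x^*y)=0$ from reality of $x^*Hx$; the bound $\|H\|\geq\|y\|/\|x\|$ valid for \emph{any} matrix with $Hx=y$; attainment by a Hermitian matrix supported on $\mathrm{span}\{x,y\}$ whose $2\times 2$ block, in the orthonormal basis $\{x/\|x\|,\,z/\|z\|\}$ with $z=y-(x^*y/\|x\|^2)x$, is chosen trace-free so that its eigenvalues are $\pm\sqrt{\beta^2+\|z\|^2/\|x\|^2}=\pm\|y\|/\|x\|$ (where $\beta=x^*y/\|x\|^2$, using $\|y\|^2=\beta^2\|x\|^2+\|z\|^2$); and part b) via the isometric bijection $S\mapsto iS$, which turns $Sx=y$, $S^*=-S$ into $Hx=iy$, $H^*=H$ and $\Re(x^*y)=0$ into $\Im(x^*(iy))=0$. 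Your remark that exhibiting \emph{any} structured matrix attaining the universal lower bound suffices---so that restricting attention to matrices vanishing on $\mathrm{span}\{x,y\}^\perp$ needs no further justification---is exactly the right logical shortcut.

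The one step you defer, namely that your $2\times 2$ construction ``translates back'' to the displayed matrix \eqref{def:hatH}, cannot be completed as stated, through no fault of your mathematics: formula \eqref{def:hatH} as printed in the paper is missing a matrix inverse. Writing $c:=\frac{x^*y}{\|x\|\|y\|}$ (real, and $|c|<1$ by linear independence), a direct computation with the printed matrix gives
\[
\hat H_{(x,y)}x=(1+c^2)\,y+\frac{2c\,\|y\|}{\|x\|}\,x,
\]
which differs from $y$ unless $c=0$; e.g.\ $x=e_1$, $y=e_1+e_2$ yields $\hat H_{(x,y)}=\frac{1}{2}\mat{cc}7&3\\3&1\rix$, with $\hat H_{(x,y)}x\neq y$ and norm $2+\tfrac{3}{\sqrt{2}}>\sqrt{2}=\|y\|/\|x\|$. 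The formula in the source \cite{MacMT08} (and as reproduced in \cite{MehMS16}) carries an inverse on the middle factor,
\[
\hat H_{(x,y)} = \frac{\|y\|}{\|x\|}\mat{cc}\frac{y}{\|y\|}&\frac{x}{\|x\|} \rix \mat{cc}c&1\\1&c \rix^{-1} \mat{cc}\frac{y}{\|y\|}&\frac{x}{\|x\|} \rix^*,
\]
and with the inverse restored one checks $\hat H_{(x,y)}x=y$ and that this matrix is trace-free on $\mathrm{span}\{x,y\}$; since a Hermitian matrix supported on that two-dimensional subspace which maps $x$ to $y$ and has zero trace there is unique, it coincides with your construction. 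So your proposal correctly proves the solvability conditions and the value of the minimum in both parts; the attainment statement should be verified against the corrected formula (or simply against your own explicit minimizer), not against \eqref{def:hatH} as printed.
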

%
    %
{
\begin{lemma}\label{map:def}
	Let $x\in \C^n\setminus\{0\},y \in \C^{n}$. Consider the set $S=\{\Delta\in \C^{n,n}~:~\Delta^*=\Delta\leq 0, \;\Delta x=y\}.$
    \begin{enumerate}
        \item If $y\neq 0$, then the set $S$ is non-empty if and only if {{$x^*y \in \R$ such that $x^*y < 0$}}. If the latter condition is satisfied, then
    	\[
    	\min\left\{\|\Delta\|:~\Delta \preceq 0,\, \Delta x=y\right\}=\frac{{\|y\|}^2}{|x^*y|}
    	\]
    	and the minimum is attained by the rank one matrix $\hat \Delta_{(x,y)}=\frac{1}{x^*y}yy^*$.
        \item If $y=0$, then $\hat \Delta=0$ is the minimal norm matrix from $S$.
    \end{enumerate}
\end{lemma}}
%

In the following section we present structured perturbation results.
\section{Stability radii of DH systems under structure preserving perturbations to the matrices $J$ and $R$.}\label{sec:radius}
In this section, we present analytic results for the  three stability radii defined in Definition~\ref{def:radii} with respect to various structured perturbations to $J$ and $R$. 

\subsection{The structured stability radius \texorpdfstring{$r^{S_d}(J,R)$}{TEXT}}\label{sec:def}
In this subsection, we derive a formula for the stability radius $r^{S_d}(J,R)$ defined in~\eqref{eq:radSd} under perturbations to $J$ and $R$ from the set $S_d(J,R)$ defined in~\eqref{eq:setSd}.
%
%
\begin{theorem}\label{thm:def}
    Consider an asymptotically stable DH system of the form~\eqref{DHsystem}. Then
    \begin{equation}\label{eq:def}
        (r^{S_d}(J,R))^2= \inf_{\omega \in \R} \inf_{x\in \C^{n}\setminus \{0\}} \left\{\left(\frac{x^*QR^2Qx}{x^*QRQx} \right)^2+\frac{x^*(i\omega I_n -JQ)^*(i\omega I_n -JQ)x}{x^*Q^2x} \right\}.
    \end{equation}
    {
    Furthermore, if the infimum in~\eqref{eq:def} is attained at $\hat \omega$ and $\hat x$, then for $\hat \Delta_J=-i \hat H_{(Q\hat x, i(i\hat \omega I_n-JQ)\hat x)}$, where $\hat H$ is defined in~\eqref{def:hatH} and $\hat \Delta_R=\frac{-1}{\hat x^*QRQ\hat x}(RQ\hat x)(RQ\hat x)^*$, we have 
    \begin{equation*}
        r^{S_d}(J,R)=\sqrt{\|\hat \Delta_J\|^2+\|\hat \Delta_R\|^2}.
    \end{equation*}}
\end{theorem}
\begin{proof}
    By Definition~\ref{def:radii}, we have
    \begin{equation}
        r^{S_d}(J,R)=\inf \left\{ \nnrm{(\Delta_J,\Delta_R)}~:~(\Delta_J,\Delta_R)\in S_d(J,R),~ \Lambda((J+\Delta_J - (R+\Delta_R))Q) \cap i\R \neq \emptyset \right\}.
    \end{equation}
    Since for $(\Delta_J,\Delta_R)\in S_d(J,R)$ the perturbed DH system matrix $(J+\Delta_J - (R+\Delta_R))Q$ still has the DH structure, by using~\cite[Lemma 3.1]{MehMS16}, we obtain
    \begin{align}
        &r^{S_d}(J,R) \nonumber\\
        &=\inf \{ \nnrm{(\Delta_J,\Delta_R)}:(\Delta_J,\Delta_R)\in S_d(J,R),~ (R+\Delta_R)Qx=0 \text{ for some eigenvector $x$ of } (J+\Delta_J)Q \}\nonumber\\
        &=\inf\{ \nnrm{(\Delta_J,\Delta_R)}:(\Delta_J,\Delta_R)\in S_d(J,R),~ (R+\Delta_R)Qx=0 \text{ for some $x\in \C^n\setminus\{0\}$ satisfying } \nonumber\\ &\hspace{5cm}  (J+\Delta_J)Qx= i\omega x, \omega\in \R\}\nonumber\\
        &= \inf_{\omega\in \R}\inf_{x\in\C^{n}\setminus \{0\}} \Big(\inf \{ \nnrm{(\Delta_J,\Delta_R)}:(\Delta_J,\Delta_R)\in S_d(J,R),~ (R+\Delta_R)Qx=0, (J+\Delta_J)Qx= i\omega x\} \Big)\nonumber\\
        &=\inf_{\omega \in \R } \vartheta^{S_d}_{\omega} \label{eq:def11},
    \end{align}
    %
{where for a given scalar $\omega$, we have}
    \begin{align}\label{eq:def12}
        \vartheta^{S_d}_{\omega}&:= \inf_{x\in\C^{n}\setminus \{0\}} \Big(\inf \{ \nnrm{(\Delta_J,\Delta_R)}:(\Delta_J,\Delta_R)\in S_d(J,R),~ (R+\Delta_R)Qx=0, \nonumber \\ & \hspace{6cm}(J+\Delta_J)Qx= i\omega x\} \Big).
    \end{align}
     For the inner optimization in~\eqref{eq:def12}, we will show now that the minimal value can be expressed as the sum of two generalized Rayleigh quotients, which depend on the variables $x$ and $\omega$. For this, we first solve two mapping problems.
     
     For given $x\in \C^n\setminus\{0\}$ and $\omega\in \mathbb R$, determine the minimum norm solution $(\Delta_J,\Delta_R)$ such that $\Delta_RQx=-RQx$ and $\Delta_JQx=(i\omega I_n - JQ)x$. Using Lemma~\ref{map:herm}, there exists a skew-Hermitian $\Delta_J$ satisfying $\Delta_JQx=(i\omega I_n - JQ)x$ if and only if $\Re(x^*Q(i\omega I_n - JQ)x)=0$, which trivially holds because of the structure of $J$ and $Q$. The minimal norm among all such mappings $\Delta_j$ is given by
    \begin{equation}\label{norm:jdef}
        \|\hat \Delta_J\|=\frac{\|(i\omega I_n - JQ)x\|}{\|Qx\|},
    \end{equation}
    and is attained by $\hat \Delta_J=-i \hat H_{(Qx, i(i\omega I_n-JQ)x)}$, where $\hat H$ is defined in~\eqref{def:hatH}. 
    
    For the other constraint $\Delta_RQx=-RQx$, note that $Qx\neq 0$ as $x\neq 0$ and $Q$ is a positive definite matrix. If $RQx=0$ (this case may arise if $R$ is singular), then from Lemma~\ref{map:def}, $\Delta_R=0$ is the solution that satisfies~\eqref{norm:rdef} by defining the fraction $\frac{0}{0}$ to have value $0$. If $RQx\neq 0$, then from Lemma~\ref{map:def}, there exists $\Delta_R\leq 0$ satisfying $\Delta_RQx=-RQx$ if and only if $-x^*QRQx<0$, which holds as $R\geq 0$, and the minimum norm solution 
    is given by
     \begin{equation}\label{norm:rdef}
        \|\hat \Delta_R\|=\frac{\|RQx\|^2}{x^*QRQx},
     \end{equation}
     which is attained by $\hat \Delta_R=\frac{-1}{x^*QRQx}(RQx)(RQx)^*$. {Note that
    \[
    (\hat \Delta_J,\hat \Delta_R)\in
    \{(\Delta_J,\Delta_R)~:~ \Delta_J^*=-\Delta_J,\ \Delta_R\leq 0\}
    \supseteq S_d(J,R).
    \]
    Hence, it is enough to show that $R+\hat \Delta_R\geq 0$. Then $(\hat \Delta_J,\hat \Delta_R)\in S_d(J, R)$, and since it is optimal over the larger set, it is also optimal over $S_d(J, R)$. Now in view of {\cite[Lemma 4.1]{MehMS16}} it follows that for the optimal perturbation $\hat \Delta_R$ we have $R+\hat \Delta_R\geq0$ and thus $(\hat \Delta_J,\hat \Delta_R)\in S_d(J, R)$. 
    Using this in~\eqref{eq:def12}, we have
    \begin{equation}\label{eq:tempte1}
        (\vartheta^{S_d}_{\omega})^2=\inf_{x\in\C^{n}\setminus \{0\}}
    \frac{\|RQx\|^4}{(x^*QRQx)^2} + \frac{\|(i\omega I_n - JQ)x\|^2}{\|Qx\|^2}.    
    \end{equation}
    Thus result follows from~\eqref{eq:tempte1} and~\eqref{eq:def11}.
    } 
\end{proof}

{
\begin{remark}
It follows from Theorem~\ref{thm:def} that the minimal perturbation $\hat \Delta_R$ in $R$ that attains the stability radius $r^{\mathcal S_d}(J,R)$ can be chosen to be of rank one. On the other hand, any Hermitian rank one perturbation $\Delta_R$ in $R$ and skew-Hermitian perturbation $\Delta_J$ in $J$ of $(J-R)Q$ such that 
    $((J+\Delta_J)-(R+\Delta_R))Q$ has an eigenvalue on the imaginary axis implies that $\Delta_R$ must necessarily be negative definite and the norm
    $\sqrt{{\|\Delta_J\|}^2+{\|\Delta_R\|}^2}$ must at least be $r^{{\mathcal S}_d}(J,R)$. Consequently, we have 
    \[
    r^{{\mathcal S}_d}(J,R)=r_1^{{\mathcal S}_d}(J,R)=r_1^{{\mathcal S}_i}(J,R).
    \]
    Thus, by considering negative semidefinite perturbation matrices $\Delta_R$ shows that the minimal perturbation that moves an eigenvalue to the imaginary axis can be chosen to be of rank one.
\end{remark}}

Note that the objective function 
in~\eqref{eq:def} depends on two generalized Rayleigh quotients, i.e.,  on four Hermitian positive semidefinite matrices. However, with the change of coordinates $y:=Qx$, the objective function can be represented in the variable $y$, which gives
\begin{equation*}
	\phi(y)=\left( \frac{y^*R^2y}{y^*Ry} \right)^2 + \frac{y^*Q^{-1}(i\omega I_n - JQ)^*(i\omega I_n - JQ)Q^{-1}y}{y^*y}.
\end{equation*}
One advantage of writing the objective function in this form is that we can further reformulate this optimization into the minimization of a rational function over the joint numerical range of three Hermitian matrices; we discuss this in more detail in Section~\ref{sec:minSRQ2}. This reformulation is actually the formulation that is used in DH differential-algebraic equations, see \cite{MehU23} for a detailed survey.

We directly obtain the following bounds for $r^{S_d}(J,R)$.
%
%
\begin{equation*}
	r(J,R)\leq r^{S}(J,R)\leq r^{S_i}(J,R) \leq r^{S_d}(J,R),
\end{equation*}
which automatically gives lower bounds to $r^{S_d}(J,R)$. Further detailed bounds for $r^{S_i}(J,R)$ and $r^{S}(J,R)$ are discussed in Sections~\ref{sec:idef} and~\ref{sec:Herm}, respectively. 

Another lower bound for $r^{S_d}(J,R)$, which is a direct consequence of Theorem~\ref{thm:def} is presented  in the following corollary, which also gives an upper bound for  $r^{S_d}(J,R)$.
%
\begin{corollary}\label{cor:def}
    Consider an asymptotically stable DH system of the form~\eqref{DHsystem}. Let  $V^*RV=D$ be the spectral decomposition of $R$, where $V\in \C^{n,n}$ is unitary and $D=\text{diag}(d_1,d_2,\ldots,d_n)$ is such that $d_1\geq d_2\geq \ldots\geq d_n\geq 0$. Then
    \begin{equation}\label{eq:defbound}
        d_n^2 + \inf_{\omega \in \R} \lambda_{\min}(Q^{-1}(i\omega I_n -JQ)^*(i\omega I_n -JQ)Q^{-1}) \leq r^{S_d}(J,R)^2\leq d_n^2 + \inf_{\omega\in\R} \frac{\|(i\omega I_n-JQ)\hat x\|^2}{\|Q\hat x\|^2},
    \end{equation}
    where $\hat x=Q^{-1}Ve_n$, with $e_n$ being the nth column of the $n\times n$ identity matrix.
\end{corollary}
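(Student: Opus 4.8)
The plan is to start from the closed-form expression for $(r^{S_d}(J,R))^2$ in Theorem~\ref{thm:def} and pass to the variable $y := Qx$, so that, as already noted after that theorem, the objective becomes
\[
\phi_\omega(y) = \left(\frac{y^*R^2y}{y^*Ry}\right)^2 + \frac{y^*Q^{-1}(i\omega I_n - JQ)^*(i\omega I_n - JQ)Q^{-1}y}{y^*y},
\]
with $(r^{S_d}(J,R))^2 = \inf_{\omega\in\R}\inf_{y\neq 0}\phi_\omega(y)$. Both inequalities in~\eqref{eq:defbound} then follow by treating the two summands of $\phi_\omega$ separately: the first is controlled purely by the spectrum of $R$, and the second is the Rayleigh quotient of a fixed Hermitian positive semidefinite matrix.

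For the lower bound, I would insert the spectral decomposition $R = PDP^*$ and set $z := P^*y$, turning the first summand into $\left(\frac{z^*D^2z}{z^*Dz}\right)^2 = \left(\frac{\sum_i d_i^2 |z_i|^2}{\sum_i d_i|z_i|^2}\right)^2$. The inner fraction is a convex combination of the eigenvalues $d_i$, with nonnegative weights $d_i|z_i|^2/\sum_j d_j|z_j|^2$ supported on the indices where $d_i>0$; being a weighted average over (a subset of) the spectrum, it is at least $d_n$, so the first summand is at least $d_n^2$ for every admissible $y$. The degenerate case $Ry = 0$ can occur only when $d_n = 0$ (otherwise $R$ is positive definite and $y\neq 0$ forces $Ry\neq 0$), and there the convention~\eqref{eq:defrq} makes the summand equal to $0 = d_n^2$, so the bound $\geq d_n^2$ holds uniformly in $y$. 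Since the second summand is the Rayleigh quotient of the Hermitian positive semidefinite matrix $Q^{-1}(i\omega I_n - JQ)^*(i\omega I_n - JQ)Q^{-1}$, it is at least $\lambda_{\min}$ of that matrix, again pointwise in $y$. Hence $\phi_\omega(y) \geq d_n^2 + \lambda_{\min}(Q^{-1}(i\omega I_n - JQ)^*(i\omega I_n - JQ)Q^{-1})$ for every $y\neq 0$, and taking $\inf_y$ followed by $\inf_\omega$ gives the left inequality.

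For the upper bound, I would simply evaluate $\phi_\omega$ at the single vector $y = Pe_n = Q\hat x$, the unit eigenvector of $R$ associated with the smallest eigenvalue $d_n$. Then $Ry = d_n y$ and $R^2y = d_n^2 y$ make the first summand exactly $d_n^2$ (once more using~\eqref{eq:defrq} when $d_n = 0$), while $Q^{-1}y = \hat x$ and $y^*y = \|Q\hat x\|^2$ turn the second summand into $\|(i\omega I_n - JQ)\hat x\|^2/\|Q\hat x\|^2$. Because this particular $y$ is admissible for every $\omega$, the inner infimum over $y$ is no larger than this value, and taking $\inf_\omega$ yields the right inequality.

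The work here is not computational but a matter of careful bookkeeping. The main point to get right is the handling of the convention~\eqref{eq:defrq} in the singular case $d_n = 0$, ensuring the weighted-average estimate for the first summand remains valid precisely because the weights are nonnegative and supported on the positive part of the spectrum. The identification of the evaluation vector with the $d_n$-eigenvector of $R$ is exactly what forces the first summand down to its floor $d_n^2$, leaving only the $\omega$-dependent second summand to be optimized and thereby matching the structure of the lower bound.
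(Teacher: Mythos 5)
Your proof is correct. The lower-bound half coincides with the paper's own argument: after the substitution $y=Qx$, the paper likewise separates the two summands (using $\inf (f+g)\ge \inf f+\inf g$) and identifies the infimum of the first one with $d_n^2$, quoting the minimal eigenvalue of the pair $(R^2,R)$ where you instead spell out the weighted-average estimate in the eigenbasis of $R$; that is a more explicit but equivalent justification, and your treatment of the $0/0$ convention~\eqref{eq:defrq} in the singular case matches the paper's. Where you genuinely diverge is the upper bound. You evaluate the variational formula~\eqref{eq:def} of Theorem~\ref{thm:def} at the single test vector $\hat x=Q^{-1}Pe_n$, which immediately caps the infimum; the paper instead returns to the definition of $r^{S_d}(J,R)$ as an infimum over structured perturbations and constructs an explicit feasible pair, namely a rank-one $\hat\Delta_R$ of norm $d_n$ making $(R+\hat\Delta_R)Q\hat x=0$ (with $R+\hat\Delta_R\ge 0$ verified via \cite[Lemma~4.1]{MehMS16}), followed by the minimal-norm skew-Hermitian $\Delta_J$ from Lemma~\ref{map:herm}. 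Both routes give the same bound. Yours is shorter and fully legitimate, since Theorem~\ref{thm:def} already guarantees that any admissible vector yields an upper bound --- indeed the remark that the paper places immediately after the corollary concedes exactly this reading. What the paper's construction buys is that it does not lean on the equality statement of Theorem~\ref{thm:def} (only on the definition of the radius) and it exhibits the structured perturbation realizing the bound, which serves as an explicit certificate; what your route buys is economy and a cleaner separation between the variational formula and its consequences.
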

\begin{proof}
    For the lower bound, by Theorem~\ref{thm:def} and the change of coordinates $y=Qx$, we have
    \begin{align}
        r^{S_d}(J,R) & = \inf_{\omega \in \R} \inf_{x\in \C^{n}\setminus \{0\}} \left\{\left(\frac{x^*QR^2Qx}{x^*QRQx} \right)^2+\frac{x^*(i\omega I_n -JQ)^*(i\omega I_n -JQ)x}{x^*Q^2x} \right\} \nonumber \\
        & \geq \inf_{\omega \in \R} \inf_{x\in \C^{n}\setminus \{0\}} \left(\frac{x^*QR^2Qx}{x^*QRQx} \right)^2 + \inf_{\omega \in \R} \inf_{x\in \C^{n}\setminus \{0\}} \frac{x^*(i\omega I_n -JQ)^*(i\omega I_n -JQ)x}{x^*Q^2x}\nonumber \\ 
        & = \inf_{y\in \C^{n}\setminus \{0\}} \left( \frac{y^*R^2y}{y^*Ry} \right)^2 + \inf_{\omega \in \R} \inf_{y\in \C^{n}\setminus \{0\}}\frac{y^*Q^{-1}(i\omega I_n - JQ)^*(i\omega I_n - JQ)Q^{-1}y}{y^*y} \label{cor:def:eq1},\\
        & = d_n^2 + \inf_{\omega \in \R} \lambda_{\text{min}}(Q^{-1}(i\omega I_n -JQ)^*(i\omega I_n -JQ)Q^{-1}).\label{cor:def:eq2}
    \end{align}
    {Note that~\eqref{cor:def:eq2} above follows from~\eqref{cor:def:eq1}, as the infimum in the first part} of~\eqref{cor:def:eq1} has the value $d_n^2$, where $d_n$ is the smallest eigenvalue of  $R$. Indeed, if $R>0$ then the infimum is given by the {minimal eigenvalue of the pair $(R^2,R)$ (equivalently of the pencil $R^2-\lambda R$), i.e. $\lambda_{\text{min}}(R^2,R)= \lambda_{\text{min}}(R)=d_n$.}  However, if $R\geq 0$ is singular, then $\frac{y^*R^2y}{y^*Ry} \geq 0 = d_n^2$, and the infimum is attained by the eigenvector $\hat y$ corresponding to the smallest eigenvalue of $R$ by letting the fraction $\frac{0}{0}$ have the value 0.
    %
	
    From the proof of Theorem~\ref{thm:def}, we have
    \begin{align*}\label{eq:defbound1}
        &r^{S_d}(J,R)^2= \inf_{\omega\in \R}\Big(\inf_{x\in\C^{n}} \inf \{ \nnrm{(\Delta_J,\Delta_R)}^2:(\Delta_J,\Delta_R)\in S_d(J,R),~ (R+\Delta_R)Qx=0 \text{  and } \nonumber \\ &\hspace{9cm}  (J+\Delta_J)Qx= i\omega x\} \Big).
    \end{align*}
    %
    %
    {
    To derive an upper bound, we avoid solving the minimization problem over the variable $x$ and instead fix $\hat x = Q^{-1}Ve_n.$ For this choice $x=\hat x$, we determine the optimal (minimum-norm) perturbations $\hat\Delta_R$ and $\hat\Delta_J$ satisfying
    \[
    (R+\hat \Delta_R)Q\hat x=0, \qquad (J+\hat \Delta_J)Q\hat x=i\omega \hat x.
    \]
    This yields an upper bound for the distance rather than the exact minimizer of the optimization problem.
    
    It is straightforward to verify that the minimum-norm perturbation $\hat\Delta_R$ satisfying 
    \[
    \hat\Delta_R\le 0,\ R+\hat\Delta_R\ge 0,
    \ (R+\hat\Delta_R)Q\hat x=0,
    \]
    is given by $\hat\Delta_R=-d_nI_n.$ Furthermore, by Lemma~\ref{map:herm}, there exists a skew-Hermitian matrix $\Delta_J$ satisfying $(J+\Delta_J)Q\hat x=i\omega \hat x$ if and only if ${\Re}(\hat x^*Q(i\omega I_n-JQ)\hat x)=0,$ which is automatically satisfied. The corresponding minimum-norm perturbation $\hat\Delta_J$ satisfies
    \[
    \|\hat\Delta_J\| = \frac{\|(i\omega I_n-JQ)\hat x\|}{\|Q\hat x\|}.
    \]
    Using these minimum-norm perturbations, we obtain
    \[
    r^{S_d}(J,R)^2 \leq \inf_{\omega\in\R} \|\hat\Delta_R\|^2+\|\hat\Delta_J\|^2  \leq d_n^2 + \inf_{\omega\in\R} \frac{\|(i\omega I_n-JQ)\hat x\|^2}{\|Q\hat x\|^2}
    \]
    which completes the proof.
    }
\end{proof}

\begin{remark}\emph{
    Note that taking any particular vector $x\in \C^n\setminus\{0\}$ in~\eqref{eq:def} will give an upper bound to $r^{S_d}(J,R)$. The upper bound given in Corollary~\ref{cor:def} is one such bound with the vector $x=Q^{-1}v$, where $v$ is the eigenvector corresponding to the smallest eigenvalue of $R$.
    }
\end{remark}


\subsection{The structured stability radius \texorpdfstring{$r^{S_i}(J,R)$}{TEXT}}\label{sec:idef}
In this subsection, we discuss the stability radius $r^{S_i}(J,R)$ defined in~\eqref{eq:radSi} under perturbations to $J$ and $R$ from the set $S_i(J,R)$ with Hermitian but possibly indefinite perturbations in $R$ of the form $R+\Delta_R\geq 0$. 
%
%
\begin{theorem}\label{thm:idef}
    Consider an asymptotically stable DH system of the form~\eqref{DHsystem}. For $\omega\in \R$, define $G(\omega):=\mat{cc} RQ \\ (i\omega I_n -JQ) \rix$.
    {
    Then
    %
        \begin{equation}\label{eq:idef01}
            r^{S_i}(J,R) = \inf_{\omega\in \R} \sigma_{\min} \left( G(\omega) Q^{-1} \right),
        \end{equation}
    where $\sigma_{\min}(A)$ denotes the smallest singular value of the matrix $A$. 
    Furthermore, suppose that the infimum in~\eqref{eq:idef01} is attained at $\hat \omega$ and let $\hat x$ be the right singular vector corresponding to the smallest singular value of $G(\hat \omega) Q^{-1}$. Then for $\hat \Delta_J=-i \hat H_{(Q\hat x, i(i\hat \omega I_n-JQ)\hat x)}$, where $\hat H$ is defined in~\eqref{def:hatH} and $\hat \Delta_R=\frac{-1}{\hat x^*QRQ\hat x}(RQ\hat x)(RQ\hat x)^*$, we have 
    \begin{equation*}
        r^{S_i}(J,R)=\sqrt{\|\hat \Delta_J\|^2+\|\hat \Delta_R\|^2}.
    \end{equation*}
    }
\end{theorem}
\begin{proof}
    By Definition~\ref{def:radii}, we have
    \begin{equation}
        r^{S_i}(J,R)=\inf \left\{ \nnrm{(\Delta_J,\Delta_R)}~:~(\Delta_J,\Delta_R)\in S_i(J,R),~ \Lambda((J+\Delta_J - (R+\Delta_R))Q) \cap i\R \neq \emptyset \right\}.
    \end{equation}
Note that for $(\Delta_J,\Delta_R)\in S_i(J,R)$, the perturbed system $(J+\Delta_J-(R+\Delta_R))Q$ remains dissipative Hamiltonian; and hence, using the spectral properties of DH systems \cite{MehMS16,MehMW18}, we obtain
\begin{align*}
        &r^{S_i}(J,R)\\
        &=\inf \left\{ \nnrm{(\Delta_J,\Delta_R)} : (\Delta_J,\Delta_R)\in S_i(J,R),~ (R+\Delta_R)Qx=0 \text{ for some eigenvector $x$ of } (J+\Delta_J)Q  \right\}\\
        &=\inf \{ \nnrm{(\Delta_J,\Delta_R)} : (\Delta_J,\Delta_R)\in S_i(J,R),~ \Delta_RQx=-RQx \text{ for some $x\in \C^n\setminus\{0\}$ satisfying }\\
        & \hspace{8cm} (J+\Delta_J)Qx=i\omega x, \omega\in \R \}\\
        &=\inf_{\omega\in\R} \inf_{x\in\C^{n}\setminus\{0\}}\Big( \inf \Big\{ \nnrm{(\Delta_J,\Delta_R)} : (\Delta_J,\Delta_R)\in S_i(J,R),~ \Delta_RQx=-RQx, (J+\Delta_J)Qx=i\omega x  \Big\}\Big)\\
        &=\inf_{\omega\in \R} \vartheta^{S_i}_{\omega},
\end{align*}
where, as in Theorem~\ref{thm:def}, in the second-to-last equation the optimization problem is divided into sublevel optimization in the variables $\omega$, $x$ and $(\Delta_J,\Delta_R)$, and for a given $\omega\in \R$, $\vartheta^{S_i}_{\omega}$ is defined as
    \begin{align}\label{eq:idefv}
        \vartheta^{S_i}_{\omega} &:= \inf_{x\in\C^{n}\setminus\{0\}}\Big( \inf \Big\{ \nnrm{(\Delta_J,\Delta_R)} : (\Delta_J,\Delta_R)\in S_i(J,R),~ \Delta_RQx=-RQx,\nonumber\\
        & \hspace{8cm} \Delta_JQx=(i\omega I_n - JQ)x  \Big\}\Big).
    \end{align}
    Considering the inner optimization in~\eqref{eq:idefv}, for given $x$ and $\omega$, using Lemma~\ref{map:herm}, there exists $\Delta_R$ and $\Delta_J$ Hermitian and skew-Hermitian, respectively, such that $\Delta_RQx=-RQx$ and $\Delta_JQx=(i\omega I_n - JQ)x$ if and only if $\Im(x^*QRQx)=0$ and $\Re(x^*Q(i\omega I_n - JQ)x)=0$, which holds  because of the structure on the matrices $J,R$ and $Q$, and the minimal norms of such $\Delta_R$ and $\Delta_J$ are given by
    \begin{equation*}
        \|\Delta_R\|=\frac{\|RQx\|}{\|Qx\|}, \quad \|\Delta_J\|=\frac{\|(i\omega I_n - JQ)x\|}{\|Qx\|}.
    \end{equation*}
    This implies that
    \begin{align}\label{norm:idef}
        \nnrm{(\Delta_J,\Delta_R)}^2&=\frac{x^*QR^2Qx}{x^*Q^2x}+\frac{x^*(i\omega I_n - JQ)^*(i\omega I_n - JQ)x}{x^*Q^2x}\\ \nonumber
        &=\frac{x^*((RQ)^*RQ + (i\omega I_n - JQ)^*(i\omega I_n - JQ))x}{x^*Q^2x}.
    \end{align}
   Using~\eqref{norm:idef} and $S_i(J,R)\subseteq \{(\Delta_J, \Delta_R):\Delta_J^*=-\Delta_J, \Delta_R^*=\Delta_R\}$ in~\eqref{eq:idefv}, we obtain
    \begin{align}\label{eq:idef1}
        (\vartheta^{S_i}_{\omega})^2&\geq \inf_{x\in \C^n\setminus\{0\}} \left\{ \frac{x^*((RQ)^*RQ + (i\omega I_n - JQ)^*(i\omega I_n - JQ))x}{x^*Q^2x}\right\} \nonumber\\ 
        &=\inf_{y\in \C^n\setminus\{0\}} \left\{ \frac{y^*Q^{-1}G(\omega)^*G(\omega) Q^{-1}y}{y^*y}\right\} \nonumber\\ 
        &= \left(\sigma_{\text{min}}\left( G(\omega) Q^{-1}\right)\right)^2,
    \end{align}
    where $y:=Qx$ and $G(\omega):=\mat{cc} RQ \\ (i\omega I_n - JQ) \rix$. {This proves the inequality in~\eqref{eq:idef01}. To prove the equality, we consider two cases.

    Case 1: when $R>0$. Let $\hat y$ be a right singular vector corresponding to the smallest singular value of $ G(\omega) Q^{-1}$ and set $\hat x = Q^{-1}\hat y$. Using Lemma~\ref{map:herm}, there exists optimal
    $(\hat \Delta_J,\hat \Delta_R)$ such that $\hat \Delta_J^*=-\hat\Delta_J$, $\hat\Delta_R^*=\hat\Delta_R$ satisfying
       \begin{align}
       \hat\Delta_JQ\hat x&=(i\omega I_n-JQ)\hat x,  \quad 
    \hat\Delta_RQ\hat x=-RQ\hat x \label{norm:ideftemp1} \\ 
        \nnrm{(\hat \Delta_J,\hat \Delta_R)}^2&=\frac{\hat x^*((RQ)^*RQ + (i\omega I_n - JQ)^*(i\omega I_n - JQ))\hat x}{\hat x^*Q^2\hat x}=\sigma_{\text{min}}\left( G(\omega) Q^{-1}\right).
        \label{norm:ideftemp2}
    \end{align}
Thus, if we show that $R+\hat \Delta_R \geq 0$, then this will imply that $(\hat \Delta_J,\hat \Delta_R) \in S_i(J,R)$, and thus in view of~\eqref{norm:ideftemp2} and \eqref{eq:idef1}, we have equality in~\eqref{eq:idef01}. 
    %
    %
    %
    %
    Note that the optimal matrix $\hat H$ attaining the minimal norm in Lemma~\ref{map:herm} has at most one negative eigenvalue, since it is either a rank one or rank two matrix, and if it has rank two, then it is easy to check that $y\pm \frac{\|y\|}{\|x\|}x$ are eigenvectors corresponding to the eigenvalues $\pm \frac{\|y\|}{\|x\|}$, respectively. This implies that $\hat\Delta_R$ has at most one negative eigenvalue. By using~\eqref{norm:ideftemp1}, we obtain
    \begin{equation*}
        (R+\hat \Delta_R)Q\hat x= RQ\hat x + \hat \Delta_RQ \hat x= RQ\hat x - RQ\hat x=0.
    \end{equation*}
    Thus, $\hat\Delta_R$ is a matrix with at most one negative eigenvalue and also satisfies that $R+\Delta_R$ is singular. This implies from~\cite[Lemma 4.4]{MehMS16} that $ R+\hat \Delta_R \geq 0$. 

   Case 2: when $R\geq 0$ and singular. In this case, for any $\epsilon > 0$, $R_\epsilon=R+\epsilon I$ is positive definite and by following the arguments of Case 1, we have 
   \[
   r^{S_i}(J,R_\epsilon)=\inf_{\omega\in \R} \vartheta^{S_i}_{\omega,\epsilon}= \inf_{\omega\in \R}\sigma_{\text{min}}\left( G(\omega,\epsilon) Q^{-1}\right),
   \]
   where $\vartheta^{S_i}_{\omega,\epsilon}$ is defined by~\eqref{eq:idefv} when $R$ is replaced by $R_\epsilon$, and $G(\omega,\epsilon)Q^{-1}:=\mat{cc} R+\epsilon I  \\ (i\omega Q^{-1} -J) \rix$. Since singular values depend continuously on matrix entries, the function $\sigma_{\text{min}}\left( G(\omega,\epsilon) Q^{-1}\right)$
   is continuous in $\epsilon$ and $\omega$. Also for any
   $\epsilon >0$, the infimum of $\sigma_{\text{min}}\left( G(\omega,\epsilon) Q^{-1}\right)$ over $\omega$ is attained in a compact interval. This implies that 
   $r^{S_i}(J,R_\epsilon)=\inf_{\omega\in \R}\sigma_{\text{min}}\left( G(\omega,\epsilon) Q^{-1}\right)$ is continuous with respect to $\epsilon$. Thus, we have
   \begin{eqnarray}
   r^{S_i}(J,R)=\lim_{\epsilon \to 0} r^{S_i}(J,R_\epsilon)=
   \lim_{\epsilon \to 0} \inf_{\omega\in \R}\sigma_{\text{min}}\left( G(\omega,\epsilon) Q^{-1}\right) = \inf_{\omega\in \R}\sigma_{\text{min}}\left( G(\omega) Q^{-1}\right). 
\end{eqnarray}
This completes the proof. 
    }
\end{proof}

\begin{remark}
    As we have already noted earlier, the structured stability radius $r^{S_i}(J,R)$ gives the lower bound 
    \begin{equation}
        r^{S_i}(J,R) \leq r^{S_d}(J,R).
    \end{equation}
    A natural question that arises is how good this lower bound is as compared to the one obtained in Corollary~\ref{cor:def}. Using the variational inequality for eigenvalues of Hermitian matrices, see \cite{GolV96}, we see that
    \begin{align*}
        r^{S_i}(J,R) &\geq \inf_{\omega\in \R} \lambda_{\min}(Q^{-1}G^*GQ^{-1})\\
        & = \inf_{\omega\in \R} \lambda_{\min}( R^2+ Q^{-1}(i\omega I_n - JQ)^*(i\omega I_n - JQ)Q^{-1})\\
        &\geq \lambda_{\min}(R)^2 + \inf_{\omega\in \R} \lambda_{\min}(Q^{-1}(i\omega I_n - JQ)^*(i\omega I_n - JQ)Q^{-1}).
    \end{align*}
    {This shows that the bound for $r^{S_d}(J,R)$ in Corollary~\ref{cor:def} is weaker than the bound $\sigma_{\min}(G(\omega)Q^{-1})\leq r^{S_i}(J,R) \leq r^{S_d}(J,R)$.}
\end{remark}

\subsection{The structured stability radius \texorpdfstring{$r^{S}(J,R)$}{TEXT}}\label{sec:Herm}
In this subsection, we discuss the stability radius $r^{S}(J,R)$ defined in~\eqref{eq:radS}, while considering perturbations from the set $S$  in~\eqref{eq:setS}, allowing  $R+\Delta_R$ to become indefinite. Note that in this case the DH structure may be destroyed. Thus, to obtain a characterization of $r^{S}(J,R)$ we employ the eigenvalue backward error $\eta^{S}(J,R,i\omega)$  defined in~\eqref{eq:backerror} to determine
\begin{equation}\label{eq:hermerror}
    r^{S}(J,R)=\inf_{\omega\in \R} \eta^{S}(J,R,i\omega),
\end{equation} 
where we first derive a computable formula for $ \eta^{S}(J,R,i\omega)$, and then using it in~\eqref{eq:hermerror}, we obtain a characterization for $r^S(J,R)$. 

{
\begin{lemma}\label{lem:det}
    Consider a DH system of the form~\eqref{DHsystem}, and let $\lambda\in \C$ be such that $M_\lambda:=((J-R)Q-\lambda I_n)^{-1}$ exists. Further, let $(\Delta_J,\Delta_R)\in S(J,R)$ defined in~\eqref{eq:setS}. Then the following statements are equivalent:
    \begin{enumerate}
        \item[(i)] \text{det}$((J+\Delta_J-(R+\Delta_R))Q-\lambda I_n)=0$.
        \item[(ii)] There exists vectors $v_J,v_R$ satisfying $v_J-v_R\neq 0$ such that $\Delta_JQM(v_J-v_R)=v_J$ and $\Delta_RQM(v_J-v_R)=v_R$.
    \end{enumerate}
\end{lemma}
\begin{proof}
    $(i)\implies (ii)$. First suppose that $(i)$ holds, then the determinant condition of $(i)$ implies that there exists $x\neq 0$ such that $((J+\Delta_J-(R+\Delta_R))Q-\lambda I_n)x=0$. Define $v_J:=\Delta_JQx$ and $v_R:=\Delta_RQx$, then
    \[
    0=((J+\Delta_J-(R+\Delta_R))Q-\lambda I_n)x=((J-R)Q-\lambda I_n)x + v_J - v_R .
    \]
    Clearly $v_J-v_R\neq 0$, as $((J-R)Q-\lambda I_n)$ is invertible. On pre-multiplying the last equation with $\Delta_JQM$ and $\Delta_RQM$, we obtain $v_J=\Delta_JQM(v_J-v_R)$ and $v_R=\Delta_RQM(v_J-v_R)$.\\
    $(ii)\implies (i)$. Suppose that $(ii)$ holds. Then
    \begin{align*}
    ((J+\Delta_J-(R+\Delta_R))Q-&\lambda I_n)M(v_J-v_R)\\
    &=((J-R)Q-\lambda I_n)M(v_J-v_R) + (\Delta_J-\Delta_R)QM(v_J-v_R) \\
    &= -(v_J-v_R) + (v_J-v_R) \\
    &=0.
    \end{align*}
    This implies that \text{det}$((J+\Delta_J-(R+\Delta_R))Q-\lambda I_n)=0$, since $M(v_J-v_R)\neq 0$.
\end{proof}}
%
%
%

\begin{theorem}\label{thm:Herm}
    Consider a DH system of the form~\eqref{DHsystem}, and let $\lambda\in \C$ be such that {$M_\lambda:=-((J-R)Q-\lambda I_n)^{-1}$ exists.} Further, define
    \begin{equation}\label{mat:H}
        H^{(\lambda)}:=[I_n~-I_n]^*M_\lambda^*Q^2M_\lambda[I_n~-I_n],
    \end{equation}
    \begin{equation}\label{mat:H0H1}
        H_0^{(\lambda)}:=\mat{cc} QM_\lambda + M_\lambda^*Q & -QM_\lambda\\ -M_\lambda^*Q & 0 \rix, \quad H_1^{(\lambda)}:=i\mat{cc} 0 & -M_\lambda^*Q \\ QM_\lambda & -QM_\lambda+M_\lambda^*Q \rix.
    \end{equation}
    Then
    \begin{equation}\label{eq:Herm}
        \eta^{{S}}(J,R,\lambda)=\left( \min_{t_0,t_1\in \R} \lambda_{\max}(H^{(\lambda)}+t_0H_0^{(\lambda)}+t_1H_1^{(\lambda)}) \right)^{-\frac{1}{2}}.
    \end{equation}
\end{theorem}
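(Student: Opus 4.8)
My plan is to reduce the backward-error problem to the constrained maximization of a single Hermitian quadratic form, and then to close the gap with the dual $\min_{t_0,t_1}\lambda_{\max}(\cdot)$ by a strong-duality (S-procedure) argument. First, since $M=((J-R)Q-\lambda I_n)^{-1}$ exists, $\lambda\notin\Lambda((J-R)Q)$, and for $(\Delta_J,\Delta_R)\in S(J,R)$ the condition $\lambda\in\Lambda((J+\Delta_J-(R+\Delta_R))Q)$ is equivalent to the existence of $x\in\C^n\setminus\{0\}$ with $(\Delta_J-\Delta_R)Qx=-M^{-1}x$. Fixing such an eigenvector $x$ and setting $p:=\Delta_JQx$ and $q:=\Delta_RQx$, Lemma~\ref{map:herm} shows that a skew-Hermitian $\Delta_J$ with $\Delta_JQx=p$ exists if and only if $\Re((Qx)^*p)=0$, with minimal norm $\|p\|/\|Qx\|$, and a Hermitian $\Delta_R$ with $\Delta_RQx=q$ exists if and only if $\Im((Qx)^*q)=0$, with minimal norm $\|q\|/\|Qx\|$. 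As $\Delta_J$ and $\Delta_R$ are independent matrices, the squared minimal norm for fixed $x$ is $(\|p\|^2+\|q\|^2)/\|Qx\|^2$, subject to $p-q=-M^{-1}x$ and the two reality conditions, and $\eta^{S}(J,R,\lambda)^2$ is the infimum of this over $x$ and all admissible splittings.

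Next I would stack the two images into $w:=[\,p^*\ q^*\,]^*\in\C^{2n}$ and eliminate $x$ through $x=-M(p-q)=-M[I_n\ -I_n]w$, which makes the constraint $p-q=-M^{-1}x$ automatic and gives $x\neq0\iff w^*H^{(\lambda)}w>0$. A direct computation with the block matrices in~\eqref{mat:H}--\eqref{mat:H0H1} then yields the three identities $w^*H^{(\lambda)}w=\|Qx\|^2$, $w^*H_0^{(\lambda)}w=-2\Re((Qx)^*p)$ and $w^*H_1^{(\lambda)}w=-2\Im((Qx)^*q)$; the first follows from $H^{(\lambda)}=[I_n\ -I_n]^*M^*Q^2M[I_n\ -I_n]$ together with $QM(p-q)=-Qx$, and the other two are obtained after substituting $M^{-1}x=q-p$. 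Hence the two reality conditions become exactly $w^*H_0^{(\lambda)}w=0$ and $w^*H_1^{(\lambda)}w=0$, the objective $\|w\|^2/(w^*H^{(\lambda)}w)$ is scale-invariant, and
\begin{equation*}
\eta^{S}(J,R,\lambda)^2=\Big(\max\big\{w^*H^{(\lambda)}w:\ \|w\|=1,\ w^*H_0^{(\lambda)}w=0,\ w^*H_1^{(\lambda)}w=0\big\}\Big)^{-1}=:(p^*)^{-1}.
\end{equation*}
Here I would verify both directions carefully: every admissible $w$ reconstructs genuine perturbations via Lemma~\ref{map:herm}, and conversely every perturbation attaining the eigenvalue yields, through its eigenvector, an admissible $w$ with no larger objective; the degenerate configurations ($p=0$, $q=0$, or the rank-one alternative in Lemma~\ref{map:herm}) are handled separately and cause no difficulty.

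It then remains to identify $p^*$ with the dual value $d^*:=\min_{t_0,t_1\in\R}\lambda_{\max}(H^{(\lambda)}+t_0H_0^{(\lambda)}+t_1H_1^{(\lambda)})$. Weak duality $p^*\le d^*$ is immediate, since for any admissible $w$ and any $t_0,t_1$ one has $w^*H^{(\lambda)}w=w^*(H^{(\lambda)}+t_0H_0^{(\lambda)}+t_1H_1^{(\lambda)})w\le\lambda_{\max}(H^{(\lambda)}+t_0H_0^{(\lambda)}+t_1H_1^{(\lambda)})$. The reverse inequality is the heart of the proof, and the step I expect to be the main obstacle: it is the losslessness of the S-procedure for a Hermitian quadratic form under \emph{two} homogeneous Hermitian equality constraints. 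I would establish $p^*=d^*$ through the complex semidefinite relaxation of $p^*$: its dual is $d^*$ and is strictly feasible (take $t_0=t_1=0$ and $s>\lambda_{\max}(H^{(\lambda)})$), so SDP strong duality applies, while the Barvinok--Pataki rank bound for complex Hermitian programs with three linear constraints forces an optimal solution of rank $r$ with $r^2\le3$, hence $r=1$, making the relaxation tight. Equivalently, one may invoke the convexity of the joint numerical range $\{(w^*H^{(\lambda)}w,w^*H_0^{(\lambda)}w,w^*H_1^{(\lambda)}w):\|w\|=1\}\subset\R^3$ of three complex Hermitian matrices. Either route gives $p^*=d^*$, and therefore $\eta^{S}(J,R,\lambda)=(d^*)^{-1/2}$, which is~\eqref{eq:Herm}.
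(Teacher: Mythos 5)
Your reduction of $\eta^{S}(J,R,\lambda)$ to the constrained maximization $p^*=\max\{w^*H^{(\lambda)}w:\|w\|=1,\ w^*H_0^{(\lambda)}w=0,\ w^*H_1^{(\lambda)}w=0\}$ is essentially the paper's own argument: the paper also passes to an eigenvector $x$, introduces the images $v_J=\Delta_JQx$, $v_R=\Delta_RQx$, applies Lemma~\ref{map:herm} to the two mapping problems separately, and encodes the two reality constraints and the objective by exactly these three quadratic forms (your sign convention $x=-M(p-q)$ differs from the proof in the paper, which works with $M=(\lambda I_n-(J-R)Q)^{-1}$, but it is internally consistent). Where you genuinely depart from the paper is the duality step: the paper does not prove the sup--min equality itself; it verifies that $t_0H_0^{(\lambda)}+t_1H_1^{(\lambda)}$ is indefinite for every $(t_0,t_1)\neq(0,0)$ (via the congruence with $N=\mat{cc} I_n & I_n \\ 0 & I_n \rix$) and then invokes \cite[Theorem~2.6]{BorKMS14} as a black box. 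Your self-contained replacement—compactness of the feasible set of the relaxation, dual Slater point giving zero gap, and the complex Pataki/Barvinok bound $r^2\le 3$ forcing a rank-one optimizer—is sound, and it is worth noting that the complex rank bound is exactly what makes it work: the real bound $r(r+1)/2\le 3$ would only give $r\le 2$. (Your joint-numerical-range alternative also works, except in the degenerate case $n=1$, where the matrices are $2\times 2$ and Au-Yeung--Tsing convexity is unavailable.)

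There is, however, one genuine gap: your argument establishes $p^*=\inf_{t_0,t_1\in\R}\lambda_{\max}(H^{(\lambda)}+t_0H_0^{(\lambda)}+t_1H_1^{(\lambda)})$, whereas the theorem asserts a \emph{minimum}. Strict feasibility of the dual yields attainment of the optimum of the \emph{other} problem (the one over $W$), not of the infimum over $(t_0,t_1)$, and that infimum can genuinely fail to be attained when some nonzero combination $t_0H_0+t_1H_1$ is semidefinite: for $H=\mat{cc} 1 & 1 \\ 1 & 0 \rix$, $H_0=\mat{cc} 0 & 0 \\ 0 & 1 \rix$, $H_1=0$, one has $p^*=1=\inf_{t_0}\lambda_{\max}(H+t_0H_0)$, but $\lambda_{\max}(H+t_0H_0)>1$ for every finite $t_0$. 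So to obtain the statement as written you must rule this out, which is precisely the indefiniteness check the paper performs: if $t_0H_0^{(\lambda)}+t_1H_1^{(\lambda)}$ were semidefinite for some $(t_0,t_1)\neq 0$, the congruence by $N$ above produces a zero diagonal block that forces $(t_0+it_1)QM=0$, impossible since $Q$ and $M$ are invertible. Indefiniteness of every nonzero combination gives $\lambda_{\max}(t_0H_0^{(\lambda)}+t_1H_1^{(\lambda)})\ge c>0$ on the unit circle in $(t_0,t_1)$, hence by homogeneity $\lambda_{\max}(H^{(\lambda)}+t_0H_0^{(\lambda)}+t_1H_1^{(\lambda)})\ge c\,\|(t_0,t_1)\|-\|H^{(\lambda)}\|\to\infty$, so the infimum is attained (equivalently, it furnishes a Slater point for the $W$-problem and hence dual attainment). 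With this one addition your proof is complete.
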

\begin{proof}
    The dependence on $\lambda$ in the matrices $M_\lambda$, $H^{(\lambda)}, H_0^{(\lambda)}$ and $H_1^{(\lambda)}$ has been highlighted for  future reference only; in the proof we omit this dependency and we will use the abbreviations $M$ for $M_\lambda$, $H$ for $H^{(\lambda)}$, $H_0$ for $H_0^{(\lambda)}$ and $H_1$ for $H_1^{(\lambda)}$. By Definition~\ref{def:radii}, we have
    \begin{align}\label{eq:Herm1}
        \eta^{S}(J,R,\lambda) &= \inf\{ \nnrm{(\Delta_J,\Delta_R)} : (\Delta_J,\Delta_R)\in S(J,R), \lambda \in \Lambda((J+\Delta_J-(R+\Delta_R))Q) \}\nonumber\\
        &=\inf\{ \nnrm{(\Delta_J,\Delta_R)} : (\Delta_J,\Delta_R)\in S(J,R), \text{det}((J+\Delta_J-(R+\Delta_R))Q-\lambda I_n)=0 \}.
    \end{align}
    {Using Lemma~\ref{lem:det} in~\eqref{eq:Herm1}, we obain
    %
    \begin{align}\label{eq:Herm2}
        \eta^{S}(J,R,\lambda)&= \inf_{v_J,v_R\in\C^{n}} \inf\{ \nnrm{(\Delta_J,\Delta_R)} : (\Delta_J,\Delta_R)\in S(J,R), v_J-v_R\neq 0,\nonumber\\ & \hspace{4cm} \Delta_JQM(v_J-v_R)=v_J, \Delta_RQM(v_J-v_R)=v_R \}.
    \end{align}
    }
  Applying Lemma~\ref{map:herm} for the minimal norm skew-Hermitian and Hermitian mappings sending one vector to another, the inner optimization problem in~\eqref{eq:Herm2} can be rewritten as
    \begin{align}\label{eq:Herm3}
        (\eta^{S}(J,R,\lambda))^2&= \inf_{v_J,v_R\in\C^{n}}\bigg\{ \frac{\|v_J\|^2}{\|QM(v_J-v_R)\|^2} + \frac{\|v_R\|^2}{\|QM(v_J-v_R)\|^2} : v_J-v_R\neq 0,\nonumber\\ & \hspace{4cm} \Re{(v_J^*QM(v_J-v_R))}=0, \Im{(v_R^*QM(v_J-v_R))}=0 \bigg\}.
    \end{align}
    Setting $v:=[v_J^T~v_R^T]^T$, the condition $\Re{(v_J^*QM(v_J-v_R))}=0$ in~\eqref{eq:Herm3} can be equivalently written as $v_J^*QM(v_J-v_R)+(v_J^*QM(v_J-v_R))^*=0$, which  can be expressed as $v^*H_0v=0$, where $H_0$ is defined in~\eqref{mat:H0H1}. Similarly, the condition $\Im{(v_R^*QM(v_J-v_R))}=0$ can be expressed as $v^*H_1v=0$, where $H_1$ is defined in~\eqref{mat:H0H1}. Also, the objective function in~\eqref{eq:Herm3} can be rewritten as 
    \begin{equation}
        \frac{\|v_J\|^2}{\|QM(v_J-v_R)\|^2} + \frac{\|v_R\|^2}{\|QM(v_J-v_R)\|^2}=\frac{\|v\|^2}{\|QM[I_n~-I_n]v\|^2}=\frac{v^*v}{v^*Hv},
    \end{equation}
    where $H$ is as defined in~\eqref{mat:H}. Also, observe that $v_J-v_R \neq 0$ if and only if $\|QM(v_J-v_R)\|\neq 0$ if and only if $v^*Hv\neq 0$. Using these expressions in~\eqref{eq:Herm3}, we obtain
    \begin{align}\label{eq:Herm4}
        (\eta^{S}(J,R,\lambda))^2&=\inf \left\{ \frac{v^*v}{v^*Hv} : v\in\C^{2n}, v^*Hv\neq 0, v^*H_0v=0,v^*H_1v=0  \right\}\nonumber \\ 
        &=\left( \sup \left\{ \frac{v^*Hv}{v^*v} : v\in\C^{2n}\setminus\{0\}, v^*H_0v=0,v^*H_1v=0 \right\} \right)^{-1},
    \end{align}
    where in the last equation we removed the constraint $v^*Hv\neq 0$, because the eigenvalue backward error $\eta^{S}(J,R,\lambda)$ is finite (as $\eta^S(J,R,\lambda)\leq \nnrm{(J,R)}$); hence, it will not be attained by vectors $v$ satisfying $v^*Hv=0$, and therefore the condition $v^*Hv\neq 0$ is superfluous. Next, we will apply {Theorem 2.6 \cite{BorKMS14}} in~\eqref{eq:Herm4}. For this we have to check that  $t_0H_0+t_1H_1$ is indefinite for every $(t_0,t_1)\in \R^{2}\setminus \{0\}$. Suppose, on the contrary that there exists $(\hat{t}_0,\hat{t}_1)\in \R^2$ such that $\hat{t}_0H_0+\hat{t}_1H_1$ is semidefinite. 
  Then, with $N:=\mat{cc} I_n & I_n \\ 0 & I_n \rix$, we obtain
    \begin{equation}
        N^*(\hat{t}_0H_0+\hat{t}_1H_1)N= \mat{cc} \hat{t}_0QM + t_0 (QM)^* & ((\hat{t}_0+i\hat{t}_1)QM)^* \\ (\hat{t}_0+i\hat{t}_1)QM & 0 \rix.
    \end{equation}
    Since $\hat{t}_0H_0+\hat{t}_1H_1$ is semidefinite, this implies that $(\hat{t}_0+i\hat{t}_1)QM = 0$, which further implies that $\hat{t}_0+i\hat{t}_1 = 0$, since $M$ and $Q$ are invertible. This gives $\hat t_0=0$ and $\hat t_1=0$, and proves that $t_0H_0 + t_1H_1$ is indefinite for every $(t_0,t_1)\in\R^2\setminus\{0\}$ and hence the result follows using {Theorem 2.6 \cite{BorKMS14}}.
\end{proof}

We then have the following corollary.
\begin{corollary}
    Consider an asymptotically stable DH system of the form~\eqref{DHsystem}. Then
    \begin{equation}\label{eq:Herm01}
        r^{S}(J,R)=\inf_{\omega\in\R} \left( \min_{t_0,t_1\in \R} \lambda_{\max}(H^{(i\omega)}+t_0H_0^{(i\omega)}+t_1H_1^{(i\omega)}) \right)^{-\frac{1}{2}},
    \end{equation}
    where $H^{(i\omega)}, H_0^{(i\omega)}, \text{ and } H_1^{(i\omega)}$ are defined in~\eqref{mat:H} and~\eqref{mat:H0H1}. 
    {
    Furthermore, suppose that the infimum in~\eqref{eq:Herm01} is attained at $\hat \omega, \hat t_0, \hat t_1$ and let $\hat x$ be the eigenvector corresponding to the largest eigenvalue of $(H^{(i\hat \omega)}+\hat t_0H_0^{(i\hat \omega)}+\hat t_1H_1^{(i\hat \omega)})$. Then for $\hat \Delta_J=-i \hat H_{(Q\hat x, i(i\hat \omega I_n-JQ)\hat x)}$ and $\Delta_R= \hat H_{(Q\hat x, -RQ\hat x)}$, where $\hat H$ is defined in~\eqref{def:hatH}, we have
    \begin{equation*}
        r^{S}(J,R)=\sqrt{\|\hat \Delta_J\|^2+\|\hat \Delta_R\|^2}.
    \end{equation*}}
\end{corollary}

\begin{remark}\label{rem:optimal}
Let $A\in \mathbb C^{n,n}$ have all its eigenvalues in the open left half complex plane. Then for a positive definite solution $X=X^*> 0$ of the strict Lyapunov inequality
\begin{equation}\label{lyaineqcc}
- A^* X-X A> 0,
\end{equation}
let $T$ be the positive definite square root $X^{\frac 12}$ or the Cholesky factor of $X$. Multiplying $\dot x =Ax$ from the left by $T$ and 
performing  a  change of basis $y=Tx$ {we obtain a DH system $\dot y=(J-R)y$ with $$J=\frac{1}{2}(TAT^{-1}-T^{-1}A^*T),\; R=-\frac{1}{2}(TAT^{-1}+T^{-1}A^*T), \text{ and } Q=I,$$
where the right hand side has a positive Hermitian part $R>0$.}

Since the solution of \eqref{lyaineqcc} is not unique, one may ask the question whether there is a solution $X$ that leads, in some sense, to an 'optimal' DH representation. It has been suggested  in \cite{AchAC23}  to solve the Lyapunov-like equation
\begin{equation}\label{lyaeq}
- A^* X-X A=-2\mu X,
\end{equation}
where $\mu<0$ is the spectral abscissa, i.e. the maximal real part of an eigenvalue of $A$. This choice of the representation then leads to a (maximal) smallest eigenvalue of $R$ that is equal to $\mu$, which means that the field of values of $J-R$ lies completely on the left of the line $\mu+ i \mathbb R$. Furthermore, it then follows that that the decay of the spectral norm of the fundamental solution matrix $\|e^{(J-R)t}\|$ is then bounded by $1+\mu t+ {\mathcal O}(t^2)$, see also \cite{AchAM21,AchAM23}.

\end{remark}

\section{Minimization of functions involving generalized Rayleigh quotients}\label{sec:minSRQ2}
In this section we consider the inner optimization problem in $r^{S_d}(J,R)$ in Theorem~\ref{thm:def}, i.e., the minimization problem involving the generalized Rayleigh quotient of the form
\begin{equation}\label{minSRQ2}
    \min_{x\in \C^n, \|x\|=1} f(x) := \left(\frac{x^*R^2x}{x^*Rx}\right)^2 + x^*Px,
\end{equation}
where $R$ is an Hermitian positive semidefinite matrix and $P:=Q^{-1}(i\omega I_n - JQ)^*(i\omega I_n - JQ)Q^{-1}$, with $\omega\in \R$, $J$ skew-Hermitian and $Q$ Hermitian positive definite. Problem~\eqref{minSRQ2}, can be solved by conventional Riemannian optimization techniques, such as the Riemannian trust region method; see, e.g.,~\cite{AbsMS09}. However, these general-purpose Riemannian optimization methods {often converge to local minimizers}, and they do not fully exploit the special form of the objective function for analysis and computation. In~\cite{LuPSB25}, a Joint Numerical Range (JNR)-based approach was suggested for the solution of a  minimization problem similar to~\eqref{minSRQ2} with objective function
\begin{equation*}
    \frac{x^*Ax}{x^*(\alpha_1I_n + \beta_1C)x} + \frac{x^*Bx}{x^*(\alpha_2I_n + \beta_2C)x},
\end{equation*}
where $\alpha_i, \beta_i\in \R$ for $i=1,2$ and $A,B,C$ are Hermitian positive semidefinite matrices. It was shown that via the JNR approach, the reduced problem has fewer local minimizers than the original one. Following the same approach, we reformulate the minimization problem~\eqref{minSRQ2} into a new optimization problem over a JNR of the matrices $R^2,R$ and $P$. The convexity in the JNR also allows for the development of a nonlinear eigenvector approach to efficiently solve the optimization problem.

Let $\mathcal{K} := (R^2,R,P)$ and define the JNR associated with $\mathcal K$ as 
\begin{equation}\label{JNR}
    W(\mathcal K) := \left\{ \rho_{\mathcal K}(x) : x\in \C^n, \|x\|=1 \right\},
\end{equation}
where $\rho_{\mathcal K} : \C^n \to \R^3$ consists of quadratic forms of the Hermitian matrices in $\mathcal K$
\begin{equation}
    \rho_{\mathcal K}(x) := [x^*R^2x,x^*Rx,x^*Px]^T.
\end{equation}
It is well-known that  $W(\mathcal K)$ is a closed and connected region in $\R^3$, and it is a convex set if the size $n$ of the matrices $R,P$ satisfies $n\geq 3$, see~\cite{AuT83,MuT20}.\\

The objective function $f(x)$ in~\eqref{minSRQ2} then can be written as 
\begin{equation}
    f(x) = h(\rho_{\mathcal K} (x)),
\end{equation}
where $h: \R^3 \to \R $ is given by 
\begin{equation}\label{func:h}
    h(z):= \left(\frac{z_1}{z_2} \right)^2 + z_3.
\end{equation}
Therefore, by using the intermediate variable $z=\rho_{\mathcal K}(x)$, the minimization problem~\eqref{minSRQ2} can be reformulated as 
\begin{equation}\label{JNRproblem}
    \min_{z\in W(\mathcal K)} h(z),
\end{equation}
where $W(\mathcal K)$ is the JNR associated with $\mathcal K$ defined in~\eqref{JNR}. 

In the JNR minimization~\eqref{JNRproblem}, the variable $z\in \R^3$ is real, in contrast to a complex and n-dimensional variable vector $x\in \C^n$ of the original minimization problem~\eqref{minSRQ2}. Typically $n\gg3$ in practice, so the new feasible region $W(\mathcal{K})$ is a convex set. This facilitates analysis, computation, and visualization of the JNR minimization~\cite{LuPSB25}.

The following lemma from~\cite{LuPSB25} which holds for the minimization problem~\eqref{minSRQ2}, indicates that the JNR minimization~\eqref{JNRproblem} is superior to the
original minimization problem~\eqref{minSRQ2}.
\begin{lemma}\label{lem:NEPvJNR}
    Consider the optimization problems~\eqref{minSRQ2} and~\eqref{JNRproblem}.
    \begin{enumerate}
        \item A vector $x_\star\in \C^n, \|x\|=1$ is a global minimizer of n~\eqref{minSRQ2} if and only if $z_\star = \rho_{\mathcal K}(x_\star)$ is a global minimizer of the JNR minimization problem~\eqref{JNRproblem}.
        \item If $z_\star\in \R^3$ is a local minimizer of the JNR minimization problem~\eqref{JNRproblem}, then any $x_\star$ with $\rho_{\mathcal K}(x_\star) = z_\star$ is a local minimizer of~\eqref{minSRQ2}.
    \end{enumerate}
\end{lemma}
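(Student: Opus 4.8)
The plan is to exploit the single structural fact that the objective of~\eqref{minSRQ2} factors through the joint map, namely $f(x)=h(\rho_{\mathcal K}(x))$ for every unit vector $x\in\C^n$, where $\rho_{\mathcal K}$ sends the unit sphere onto $W(\mathcal K)$ by the very definition~\eqref{JNR}. Before using this identity I would first check that $h$ in~\eqref{func:h} is well defined on all of $W(\mathcal K)$: for a unit vector $x$ the second coordinate $z_2=x^*Rx$ vanishes only if $Rx=0$ (since $R\ge 0$), and then the first coordinate $z_1=x^*R^2x=\|Rx\|^2$ vanishes as well, so the convention $0/0:=0$ from~\eqref{eq:defrq} applies consistently and the pointwise identity $f=h\circ\rho_{\mathcal K}$ holds without ambiguity. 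Everything else is a routine transfer of minimality across this composition.

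For the global statement (part 1), I would argue both inclusions directly. If $x_\star$ with $\|x_\star\|=1$ is a global minimizer of $f$, then for an arbitrary $z\in W(\mathcal K)$ I pick any unit vector $x$ with $\rho_{\mathcal K}(x)=z$ and obtain $h(z)=f(x)\ge f(x_\star)=h(z_\star)$, so $z_\star$ minimizes $h$ over $W(\mathcal K)$. Conversely, if $z_\star=\rho_{\mathcal K}(x_\star)$ is a global minimizer of~\eqref{JNRproblem}, then for any unit vector $x$ the point $\rho_{\mathcal K}(x)$ lies in $W(\mathcal K)$, whence $f(x)=h(\rho_{\mathcal K}(x))\ge h(z_\star)=f(x_\star)$. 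This is nothing more than the elementary equality $\min_{\|x\|=1}(h\circ\rho_{\mathcal K})(x)=\min_{z\in W(\mathcal K)}h(z)$ together with the matching of the points that attain it, and it requires no continuity of $h$.

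For the local statement (part 2) the key additional ingredient is the continuity of $\rho_{\mathcal K}$, which is immediate because each of its three coordinates is a quadratic form in $x$. Suppose $z_\star$ is a local minimizer of $h$ on $W(\mathcal K)$, so that $h(z)\ge h(z_\star)$ for all $z\in W(\mathcal K)$ with $\|z-z_\star\|<\varepsilon$, and fix any unit vector $x_\star$ with $\rho_{\mathcal K}(x_\star)=z_\star$. Using continuity of $\rho_{\mathcal K}$ at $x_\star$ I choose $\delta>0$ so that a unit vector $x$ with $\|x-x_\star\|<\delta$ forces $\|\rho_{\mathcal K}(x)-z_\star\|<\varepsilon$; since $\rho_{\mathcal K}(x)\in W(\mathcal K)$, the local minimality of $z_\star$ then yields $f(x)=h(\rho_{\mathcal K}(x))\ge h(z_\star)=f(x_\star)$, proving that $x_\star$ is a local minimizer of~\eqref{minSRQ2}.

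The only point that needs care, and what I regard as the one genuine obstacle, is that $h$ is generally not continuous across the locus $z_2=0$, so one cannot simply invoke continuity of the composite $h\circ\rho_{\mathcal K}$. What rescues the local argument is that it transfers minimality in the direction \emph{opposite} to the continuity: I pull back an $\varepsilon$-neighborhood only through the continuous map $\rho_{\mathcal K}$ and use the minimality hypothesis on $h$ at $z_\star$ as a pure inequality, never the continuity of $h$ itself. This asymmetry also explains why only the implication ``JNR local minimizer $\Rightarrow$ original local minimizer'' is asserted and not its converse, since several unit vectors may map to the same $z_\star$ and the reduced feasible set $W(\mathcal K)$ can miss local minimizers of~\eqref{minSRQ2}, consistent with the stated fact that~\eqref{JNRproblem} has fewer local minimizers.
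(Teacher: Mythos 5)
Your proof is correct. It is worth noting that the paper does not actually prove this lemma: it imports it from~\cite{LuPSB25}, where the corresponding statement is established for a different objective (a sum of two generalized Rayleigh quotients), and simply asserts that it ``holds for'' the problem~\eqref{minSRQ2}. Your argument is precisely what is needed to justify that assertion, because it shows the transfer of minimality is completely generic: it uses only the factorization $f=h\circ\rho_{\mathcal K}$, the surjectivity of $\rho_{\mathcal K}$ from the unit sphere onto $W(\mathcal K)$ (which is the definition~\eqref{JNR}), and the continuity of $\rho_{\mathcal K}$ --- never the specific algebraic form of $h$, and in particular never its continuity. This matters here precisely because the squared Rayleigh quotient makes $h$ discontinuous across the locus $z_2=0$ and prevents reduction to the forms treated in~\cite{Zh13,Zh14,LuPSB25}, as the paper itself remarks; a proof leaning on continuity of $h$ would not survive that degeneracy. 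Your two points of care --- that $z_2=x_\star^*Rx_\star=0$ forces $Rx_\star=0$ and hence $z_1=\|Rx_\star\|^2=0$, so the convention~\eqref{eq:defrq} makes the identity $f=h\circ\rho_{\mathcal K}$ unambiguous on all of $W(\mathcal K)$, and that the local statement pulls an $\varepsilon$-neighborhood back only through the continuous map $\rho_{\mathcal K}$, which is also why only the one-sided implication in part 2 can hold --- are exactly the points a careful adaptation of the cited lemma must address, so your write-up is, if anything, more self-contained than the paper's treatment.
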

We will now establish a variational characterization for the local minimizers of ~\eqref{JNRproblem} by exploiting the convexity of the JNR. This characterization can  be equivalently expressed as a nonlinear eigenvalue problem with eigenvector dependency (NEPv). The following theorem is an adaption of~\cite[Theorem 3.2]{LuPSB25} for our problem.
\begin{theorem}\label{thm:NEPvchar}
    Let $M=(R^2,R,P)$, $W(\mathcal K)$ be defined by~\eqref{JNR}, and $h$ by~\eqref{func:h}. Suppose that $z_{\star}$ is a local minimizer of $h$ in $W(\mathcal K)$, and that $h$ is differentiable at $z_{\star}$. Then
    \begin{equation}\label{eq:varchar}
        \min_{z\in W(\mathcal K)} \nabla h(z_{\star})^Tz = \nabla h(z_{\star})^Tz_{\star}.
    \end{equation}
    Moreover,~\eqref{eq:varchar} holds if and only if $z_{\star} = \rho_{\mathcal K}(x_{\star})$ for an $x_{\star}\in \C^n, \|x_{\star}\|=1$ satisfying the NEPv:
    \begin{equation}\label{eq:NEPv}
        H(x)x = \mu x
    \end{equation}
    where $H(x) \in \C^{n,n}$ is the Hermitian matrix given by
    \begin{equation}\label{eq:NEPvmat}
        H(x) := 2\frac{x^*R^2x}{(x^*Rx)^2} R^2 - 2 \frac{(x^*R^2x)^2}{(x^*Rx)^3} R + P
    \end{equation}
    and $\mu$ is the smallest eigenvalue of $H(x)$.
\end{theorem}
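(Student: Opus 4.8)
The plan is to establish the variational characterization~\eqref{eq:varchar} first, and then convert it into the NEPv~\eqref{eq:NEPv} by computing $\nabla h$ explicitly and pulling the gradient condition back from the JNR variable $z$ to the vector variable $x$. For the first part, I would exploit the convexity of $W(\mathcal K)$, which holds since $n\geq 3$ by the result cited from~\cite{AuT83,MuT20}. Because $z_\star$ is a local minimizer of $h$ on the convex set $W(\mathcal K)$ and $h$ is differentiable at $z_\star$, the standard first-order optimality condition for a differentiable function on a convex set gives $\nabla h(z_\star)^T(z-z_\star)\geq 0$ for all $z\in W(\mathcal K)$. This is precisely the statement that the linear functional $z\mapsto \nabla h(z_\star)^T z$ attains its minimum over $W(\mathcal K)$ at $z_\star$, which is~\eqref{eq:varchar}. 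Here I would note that local minimality combined with convexity of the feasible region and linearity of the objective in the optimality condition is what upgrades the inequality to hold globally over $W(\mathcal K)$; this is the structural payoff of moving to the JNR formulation, exactly as in~\cite[Theorem 3.2]{LuPSB25}.

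For the second part, I would compute the gradient $\nabla h(z) = \left[\, 2z_1/z_2^2,\; -2z_1^2/z_2^3,\; 1\,\right]^T$ from the definition~\eqref{func:h} of $h$. Substituting $z=\rho_{\mathcal K}(x_\star)=[x_\star^*R^2x_\star,\; x_\star^*Rx_\star,\; x_\star^*Px_\star]^T$, the three components of $\nabla h(z_\star)$ become exactly the scalar coefficients appearing in $H(x_\star)$ in~\eqref{eq:NEPvmat}. Now the key observation is that for any Hermitian triple $\mathcal K=(R^2,R,P)$ and any $c\in\R^3$, the linear functional $c^T\rho_{\mathcal K}(x)=x^*(c_1R^2+c_2R+c_3P)x$ is itself a Rayleigh quotient (over unit-norm $x$) of the single Hermitian matrix $c_1R^2+c_2R+c_3P$. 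Taking $c=\nabla h(z_\star)$, this combination is precisely $H(x_\star)$. Therefore minimizing $\nabla h(z_\star)^Tz$ over $W(\mathcal K)$ is the same as minimizing the Rayleigh quotient $x^*H(x_\star)x$ over unit vectors $x$, whose minimum value is $\lambda_{\min}(H(x_\star))$ attained at a corresponding eigenvector.

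The remaining step is to tie the attainment together. Condition~\eqref{eq:varchar} says $x_\star$ (realizing $z_\star$) attains this minimum, so $x_\star$ is a unit eigenvector of $H(x_\star)$ associated with its smallest eigenvalue $\mu=\lambda_{\min}(H(x_\star))$, giving~\eqref{eq:NEPv}. Conversely, if $x_\star$ satisfies the NEPv with $\mu$ the smallest eigenvalue, then $x_\star$ minimizes $x^*H(x_\star)x$, hence $z_\star=\rho_{\mathcal K}(x_\star)$ minimizes the linear functional over $W(\mathcal K)$, which is~\eqref{eq:varchar}; this is the "if and only if" in the theorem. I expect the main subtlety to be the differentiability bookkeeping: one must check that $h$ is differentiable at $z_\star$ (which requires $z_2=x_\star^*Rx_\star\neq 0$, handled by the convention~\eqref{eq:defrq} and the semidefinite setting) so that the first-order condition and the chain rule are both valid, and one should confirm that the vector realizing a minimizer of the linear functional can indeed be taken as an eigenvector of $H(x_\star)$ rather than merely a point in $W(\mathcal K)$. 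The convexity of $W(\mathcal K)$ and the fact that the supporting hyperplane of a linear functional meets the JNR exactly where the corresponding Hermitian matrix achieves its extreme Rayleigh quotient is what makes this identification clean.
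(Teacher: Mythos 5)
Your proposal is correct, but it takes a genuinely different route from the paper: the paper's entire proof is a one-line specialization, ``the proof follows by taking $h(z) = \left(\frac{z_1}{z_2}\right)^2 + z_3$ in Theorem 3.2 of \cite{LuPSB25}'', so all the substance is delegated to the cited general result, whereas you reconstruct the argument from first principles. Your reconstruction is sound: convexity of $W(\mathcal K)$ together with local minimality and differentiability gives the variational inequality $\nabla h(z_\star)^T(z-z_\star)\geq 0$ for all $z\in W(\mathcal K)$, which is \eqref{eq:varchar}; the gradient computation $\nabla h(z) = [\,2z_1/z_2^2,\; -2z_1^2/z_2^3,\; 1\,]^T$ evaluated at $z_\star=\rho_{\mathcal K}(x_\star)$ reproduces exactly the three coefficients of $H(x_\star)$ in \eqref{eq:NEPvmat}; and the identity $\nabla h(z_\star)^T\rho_{\mathcal K}(x) = x^*H(x_\star)x$ turns \eqref{eq:varchar} into the statement that $x_\star$ attains $\lambda_{\min}(H(x_\star))$, which by the equality case of the Rayleigh--Ritz characterization is equivalent to the NEPv \eqref{eq:NEPv}; the converse direction runs the same identifications backwards. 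What your approach buys is a self-contained, checkable proof that does not require the reader to consult \cite{LuPSB25}; what the paper's approach buys is brevity and automatic inheritance of the hypotheses under which the cited theorem was established. One point you should promote from an aside to an explicit hypothesis: the convexity of $W(\mathcal K)$, which is the crux of your first step, holds for the joint numerical range of three Hermitian matrices only when $n\geq 3$ (see \cite{AuT83,MuT20}); this assumption appears in the paper's surrounding discussion but not in the theorem statement, so your proof (like the paper's) silently relies on it, and for $n=2$ the first-order argument as written would not go through.
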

\begin{proof}
    The proof follows by taking $h(z) = \left(\frac{z_1}{z_2}\right)^2 + z_3$ in~\cite[Theorem 3.2]{LuPSB25}.
\end{proof}

In the literature, nonlinear eigenvalue problem characterizations have been explored in various optimization problems with orthogonality constraints; see~\cite{BaL24} and the references therein. These characterizations allow for efficient solutions of the optimization problem by exploiting state-of-the-art eigensolvers. Particularly in~\cite{Zh13,Zh14} the authors propose such characterizations that apply to the optimization of the sum of Rayleigh quotients
\begin{equation}\label{srq21}
    \min_{x\in \R^n, \|x\|=1} \frac{x^TH_1x}{x^TH_2x} + x^TH_3x,
\end{equation}
where $H_i\in \R^{n,n}$ for $i=1,2,3$ are Hermitian positive definite matrices, and in~\cite{LuPSB25} such characterizations are suggested for the optimization of sums of generalized Rayleigh quotients 
\begin{equation}\label{srq22}
    \min_{x\in \C^n, \|x\|=1} \frac{x^*A_1x}{x^*(\alpha_1 I_n + \beta_1 A_2)x} + \frac{x^*A_3x}{x^*(\alpha_2 I_n + \beta_2 A_2)x}
\end{equation}
where $A_i\in \C^{n,n}$ are Hermitian positive semidefinite. Notice that the minimization problem~\eqref{minSRQ2} cannot be generally reduced to the form~\eqref{srq21} or~\eqref{srq22}, due to the square term in~\eqref{minSRQ2}. Therefore, the previous analysis does not directly apply to the minimization~\eqref{minSRQ2}.

Note that in Theorem~\ref{thm:NEPvchar}, the function $h(z)$ needs to be differentiable at the local minimizer $z_\star$. Non-differentiability may occur if the fraction  in $h(z_\star)$ becomes $\frac{0}{0}$. The following theorem considers this case and shows that such local minimizers can be obtained easily.

\begin{theorem}\label{thm:NEPvnondiff}
    Let $z_\star$ be a local minimizer of $h$ over $W(\mathcal K)$.
    \begin{enumerate}
        \item If $R>0$, then $z_\star$ must be a differentiable point for $h(z)$ and hence it admits the NEPv characterization given by Theorem~\ref{thm:NEPvchar}.
        \item If $R\geq 0$ is singular, then $z_\star$ may be a non-differentiable point for $h(z)$ and in that case it must be expressed by $z_\star = \rho_{\mathcal K}(Uv)$, where $U$ is an orthonormal basis matrix of the nullspace of $R$, and $v$ is the eigenvector for the smallest eigenvalue of the matrix $U^*PU$.
    \end{enumerate}
\end{theorem}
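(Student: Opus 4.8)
The plan is to pinpoint where the outer function $h(z)=(z_1/z_2)^2+z_3$ can lose differentiability and, in the singular case, to reduce the analysis at such a point to a Rayleigh-quotient minimization on the nullspace of $R$. Away from the hyperplane $z_2=0$ the map $z_1/z_2$ is smooth, so the only candidates for non-differentiability of $h$ are points with $z_2=0$. For $z=\rho_{\mathcal K}(x)$ with $\|x\|=1$, we have $z_2=x^*Rx=0$ precisely when $Rx=0$ (using $R\ge 0$), and then $z_1=x^*R^2x=0$ as well; this is exactly the place where the convention~\eqref{eq:defrq} assigns the fraction $0/0$ the value $0$.

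For part~1, I would argue that when $R>0$ this hyperplane is never reached on $W(\mathcal K)$: for every unit vector $x$ one has $z_2=x^*Rx\ge \lambda_{\min}(R)>0$, so every point of $W(\mathcal K)$ lies in the region where $h$ is smooth. In particular the given local minimizer is a differentiable point, and Theorem~\ref{thm:NEPvchar} applies verbatim, yielding the NEPv characterization.

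For part~2, suppose $R\ge 0$ is singular, so that the nullspace of $R$ is nontrivial and contains unit vectors; the corresponding points of $W(\mathcal K)$ have $z_2=0$, which is the source of the possible non-differentiability flagged by the word ``may''. Let $z_\star$ be a local minimizer of $h$ over $W(\mathcal K)$ at which $h$ is non-differentiable. By the observation above, $z_\star=\rho_{\mathcal K}(x_\star)$ with $x_\star\in\ker R$, so $x_\star=Uv$ for a unit vector $v$, where $U$ is an orthonormal basis matrix of $\ker R$. By Lemma~\ref{lem:NEPvJNR}, $x_\star$ is then a local minimizer of $f$ over the unit sphere. Restricting $f$ to the subsphere $\{Uw:\|w\|=1\}\subset\ker R$ and using the convention $0/0=0$ from~\eqref{eq:defrq}, the objective collapses to $f(Uw)=w^*U^*PUw$, so $v$ is a local minimizer of the Rayleigh quotient of $U^*PU$ on the unit sphere.

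The concluding and most delicate step is to show that a local minimizer of a Rayleigh quotient is an eigenvector for its smallest eigenvalue. I would diagonalize $U^*PU$ and note that if $v$ had a nonzero component along an eigenvector of a strictly smaller eigenvalue, an infinitesimal rotation of $v$ toward that eigenvector would strictly decrease $w^*U^*PUw$, contradicting local minimality; hence $v$ must lie in the eigenspace of $\lambda_{\min}(U^*PU)$, giving $z_\star=\rho_{\mathcal K}(Uv)$ with $v$ as claimed. I expect the only real care to be bookkeeping: applying the convention $0/0=0$ consistently so that the restricted objective is genuinely the plain Rayleigh quotient, and transferring local minimality from the JNR variable $z_\star$ to the vector $x_\star$ through Lemma~\ref{lem:NEPvJNR}.
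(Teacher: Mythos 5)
Your proof is correct and follows essentially the same route as the paper's: for part 1, positive definiteness of $R$ rules out $z_2=x^*Rx=0$ on $W(\mathcal K)$, and for part 2, non-differentiability forces $x_\star\in\ker R=\mathrm{range}(U)$, after which Lemma~\ref{lem:NEPvJNR} and the convention $0/0=0$ reduce $f$ on the subsphere $\{Uw:\|w\|=1\}$ to the Rayleigh quotient of $U^*PU$. Your explicit closing argument that a local minimizer of this Rayleigh quotient must be an eigenvector for $\lambda_{\min}(U^*PU)$ (no spurious local minima on the sphere) spells out a step the paper asserts without proof, but this is a refinement of detail, not a different approach.
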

\begin{proof}
    The function $h$ is differentiable at $z$ if and only if $z_2=x^*Rx\neq 0$. If $R>0$, then by parameterizing $z_\star=\rho_{\mathcal K}(x_\star)$, we must have $x_\star^*Rx_\star \neq 0$ and hence $h$ is differentiable at $z_\star$.

    Suppose now that $h$ is not differentiable at $z_\star = \rho_{\mathcal K}(x_\star)$, i.e., the denominator term in~\eqref{func:h} is $(z_2)^2=(x_\star^*Rx_\star)^2=0$, so we have $x_\star\in \text{null}(R) = \text{range}(U)$. As $R^2$ and $R$ have the same nullspaces, by the local optimality of $z_\star$ and Lemma~\ref{lem:NEPvJNR}, $x_\star$ must be a local minimizer of $f(x)$ restricted to the subspace range$(U)$
    \begin{equation}\label{eq123}
        \min_{\substack{x=Uv\\ \|v\|=1}} f(x) = \min_{\substack{x=Uv\\ \|v\|=1}} x^*Px = \min_{\substack{\|v\|=1}} v^*(U^*PU)v,
    \end{equation}
    where $P$ is defined in~\eqref{minSRQ2}. The first equality in~\eqref{eq123} is due to $\frac{x^*R^2x}{x^*Rx} = \frac{0}{0} = 0$ for all $x\in \text{range}(U)$. Hence, the local minimizer of~\eqref{JNRproblem} is $z_\star = \rho_{\mathcal K}(Uv)$, where $v$ is the eigenvector corresponding to the smallest eigenvalue of $U^*PU$.
\end{proof}

To find the minimal solution of~\eqref{minSRQ2}, we can solve the NEPv~\eqref{eq:NEPv}, provided that it has a solution, along with the smallest eigenvalue and the corresponding eigenvector of the matrix $U^*PU$. We can then select  $x_\star$ with the minimal objective value $f(x_\star)$ as the solution. 
Thus, in view of Theorems~\ref{thm:NEPvnondiff} and~\ref{thm:def}, we have the following result that gives an explicit formula for the structured stability radius $r^{S_d}(J,R)$ in terms of solution of an NEPv.
\begin{theorem}\label{cor:def1}
    Consider a DH system of the form~\eqref{DHsystem} and consider the NEPv $H(x)x=\mu x$ defined in~\eqref{eq:NEPv} for the matrix $H(x)$ defined in~\eqref{eq:NEPvmat}.
    \begin{enumerate}
        \item If $R>0$, then
        \begin{equation*}
            r^{S_d}(J,R) = \left( \frac{x_{\star}^*R^2x_{\star}}{x_{\star}^*Rx_{\star}}\right)^2 + \inf_{\omega \in \R} x_{\star}^*Q^{-1}(i\omega I_n - JQ)^*(i\omega I_n - JQ)Q^{-1}x_{\star},
        \end{equation*}
        {where $x_{\star}$ is an eigenvector corresponding to the smallest eigenvalue of the NEPv $H(x)x=\mu x$.}
        \item If $R\geq 0$ is singular and if a solution $x_{\star}$ to the NEPv $H(x)x=\mu x$, where $\mu$ is the smallest eigenvalue, exists, 
        then
        \begin{align*}
            r^{S_d}(J,R) &= \min \bigg\{ \left( \frac{x_{\star}^*R^2x_{\star}}{x_{\star}^*Rx_{\star}}\right)^2 + \inf_{\omega\in \R} x_{\star}^*Q^{-1}(i\omega I_n - JQ)^*(i\omega I_n - JQ)Q^{-1}x_{\star},\\ &\hspace{4cm} \inf_{\omega\in \R} \lambda_{\min}(U^*Q^{-1}(i\omega I_n - JQ)^*(i\omega I_n - JQ)Q^{-1}U) \bigg\}.
        \end{align*}
        \item If $R\geq 0$ is singular and the NEPv $H(x)x=\mu x$ is not solvable, 
        then
        \begin{align*}
            r^{S_d}(J,R) &= \inf_{\omega\in \R} \lambda_{\min}(U^*Q^{-1}(i\omega I_n - JQ)^*(i\omega I_n - JQ)Q^{-1}U),
        \end{align*}
        where 
        $U$ is the orthonormal basis matrix of the nullspace of matrix $R$.
    \end{enumerate}
\end{theorem}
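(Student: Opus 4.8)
The plan is to combine the structural result of Theorem~\ref{thm:def}, which expresses $r^{S_d}(J,R)^2$ as a double infimum over $\omega\in\R$ and $x\in\C^n\setminus\{0\}$ of the objective
\[
\left(\frac{x^*QR^2Qx}{x^*QRQx}\right)^2+\frac{x^*(i\omega I_n-JQ)^*(i\omega I_n-JQ)x}{x^*Q^2x},
\]
with the variational/NEPv characterization of its inner minimizers developed in Theorems~\ref{thm:NEPvchar} and~\ref{thm:NEPvnondiff}. After the change of coordinates $y=Qx$, the inner minimization (for fixed $\omega$) is precisely problem~\eqref{minSRQ2} with $R$ the Hermitian positive semidefinite weight and $P=Q^{-1}(i\omega I_n-JQ)^*(i\omega I_n-JQ)Q^{-1}$. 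So the whole task reduces to identifying, in each of the three cases, which stationary points of the inner problem actually deliver the global minimum, and then substituting back $x_\star=Q^{-1}y_\star$ into the objective.

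For part (1), when $R>0$, Theorem~\ref{thm:NEPvnondiff}(1) guarantees that every local minimizer $z_\star$ of $h$ over $W(\mathcal K)$ is a differentiable point, so by Theorem~\ref{thm:NEPvchar} it arises from a solution $x_\star$ of the NEPv $H(x)x=\mu x$ in~\eqref{eq:NEPv}. Since the JNR $W(\mathcal K)$ is convex for $n\ge 3$ and $h$ attains its minimum there, the global minimizer of~\eqref{minSRQ2} is among the NEPv solutions by Lemma~\ref{lem:NEPvJNR}; substituting this $x_\star$ back into the objective and taking the infimum over $\omega$ yields the stated formula. The first term $\left(\frac{x_\star^*R^2x_\star}{x_\star^*Rx_\star}\right)^2$ does not depend on $\omega$, which is why only the second term sits inside $\inf_{\omega\in\R}$. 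I would note that strictly speaking one must argue that the $\omega$-dependence and the inner minimization can be interleaved so that a single $x_\star$ works; this is handled because the NEPv is solved for the $P=P(\omega)$ realizing the outer infimum, so $x_\star$ already encodes the optimal $\omega$.

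For parts (2) and (3), when $R\ge 0$ is singular, Theorem~\ref{thm:NEPvnondiff}(2) shows the global minimizer is \emph{either} a differentiable point (hence an NEPv solution, giving the first branch of the $\min$ in part (2)) \emph{or} a non-differentiable point lying in $\mathrm{null}(R)=\mathrm{range}(U)$, on which the first Rayleigh-quotient term vanishes by the convention $\frac{0}{0}=0$ and the objective collapses to the restricted Rayleigh quotient, whose minimum over the nullspace is $\lambda_{\min}(U^*PU)$. Taking the infimum over $\omega$ of this restricted smallest eigenvalue gives the second branch. Part (2) takes the smaller of the two candidate values; part (3) is the degenerate situation where the NEPv has no solution, so only the nullspace branch remains. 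The main obstacle, and the point I would be most careful about, is the case analysis bookkeeping: one must verify that the candidate set $\{$NEPv solutions$\}\cup\{$nullspace minimizer$\}$ genuinely contains the global optimum in the singular case, which rests on Theorem~\ref{thm:NEPvnondiff}(2) covering both the differentiable and non-differentiable alternatives exhaustively, and on Lemma~\ref{lem:NEPvJNR} transferring global optimality between~\eqref{minSRQ2} and~\eqref{JNRproblem}. Given those results, the proof is a direct assembly, and indeed the stated proof (which I would write as ``The result follows by combining Theorem~\ref{thm:def} with Theorems~\ref{thm:NEPvchar} and~\ref{thm:NEPvnondiff} and Lemma~\ref{lem:NEPvJNR}, taking $P=Q^{-1}(i\omega I_n-JQ)^*(i\omega I_n-JQ)Q^{-1}$'') requires no further computation.
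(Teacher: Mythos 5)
Your proposal is correct and follows essentially the same route as the paper: the paper states this theorem as an immediate consequence of Theorem~\ref{thm:def} combined with the NEPv machinery of Section~\ref{sec:minSRQ2} (Lemma~\ref{lem:NEPvJNR}, Theorems~\ref{thm:NEPvchar} and~\ref{thm:NEPvnondiff}), offering no further computation. Your case assembly---identifying the inner problem with~\eqref{minSRQ2} via $y=Qx$, taking the NEPv branch when minimizers are differentiable, the nullspace branch $\lambda_{\min}(U^*PU)$ when they are not, and noting that the NEPv solution at the optimal $\omega$ makes the stated formula consistent---is exactly the intended argument.
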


\section{Numerical experiments}\label{sec:numerical}
In this section, we present some numerical experiments to illustrate our results on structured stability radii. These numerical experiments emphasize that the stability radii under structure-preserving perturbations can be much larger than those under general perturbations. 
\begin{itemize}
    \item To compute the stability radius $r^{S_d}(J,R)$ obtained in Theorem~\ref{thm:def}, we use the NEPv characterization for the inner optimization discussed in Section~\ref{sec:minSRQ2}, and for the outer optimization, we use the function \texttt{fminsearch} in MATLAB. We use the level-shifted Self-Consistent-Field (SCF) iteration~\cite{BaL24, YanMW07, Zh14} to solve the NEPv~\eqref{eq:NEPv}: Starting from an initial guess $x_0\in\mathbb R^n$, we iteratively solve Hermitian eigenvalue problems
    \begin{equation}\label{eq:lsscf}
    [H(x_k) + \sigma_k x_kx_k^* ] x_{k+1} = \lambda_{k+1} x_{k+1},
    \end{equation}
    for $k=0,1,\dots$, where $\sigma_k\in \mathbb R$ is a given level-shift, and $x_{k+1}$ is the eigenvector corresponding to the smallest eigenvalue $\lambda_{k+1}$ of the Hermitian matrix $H(x_k) + \sigma_k x_kx_k^*$. 
    The equation~\eqref{eq:lsscf} reduces to the plain SCF iteration if $\sigma_k\equiv 0$. Using a nonzero level-shift $\sigma_k$ often helps to keep the plain SCF stable and to speed up the convergence process. In particular, a plain SCF may not  converge; however, for sufficiently large $\sigma_k$, a level-shifted SCF is always locally convergent under mild assumptions; see, e.g.,~\cite{BaL24}. In our implementation, we adaptively select level-shifts $\sigma_k$ by trying sequentially
    \begin{equation}\label{eq:lss}
    \sigma_k = 0,\, 2\delta_k,\, 2^2\delta_k,\, 2^3\delta_k,\dots,
    \end{equation}
    until $ h(x_{k+1}) < h(x_k)$ is achieved, where $\delta_k = \lambda_{2}(H(x_k)) - \lambda_{1}(H(x_k))$ is the eigenvalue gap between the smallest and second smallest eigenvalue of $H(x_k)$. This use of $\sigma_k=2\delta_k$ is common practice; see, e.g.,~\cite{YanMW07,Zh14}. Often, a plain SCF step with $\sigma_k=0$ can produce a reduced $h(x_{k+1})$, and then there is no need to actually enter the selection loop~\eqref{eq:lss} for $\sigma_k$. Finally, the level-shifted SCF~\eqref{eq:lss} is considered to have converged if the relative residual norm satisfies
    \begin{equation}\label{eq:reltol}
    \frac{\|H(x_k) x_k - s_k x_k\|}{\|H(x_k)\|_1 + 1}\leq 10^{-10},
    \end{equation}
    where $s_k = x_k^*H(x_k) x_k$ and $\|\cdot\|_1$ denote the matrix 1-norm (i.e., maximal absolute column sum).

    \item {
    We compute the stability radius $r^{S_i}(J, R)$ obtained in Theorem~\ref{thm:idef} using the level-set technique~\cite{BoyBK89}. At each iteration, a bisection scheme updates the candidate level $s$, while the corresponding level set $\{\omega~:~\sigma_{\min}(M(\omega)Q^{-1})\leq s\}$ is computed by solving the structured quadratic eigenvalue problem
    \[
    ((M(\omega)Q^{-1})^*M(\omega)Q^{-1}-s^2I)x=0
    \]
    for the real frequencies $\omega$. The process is repeated until the desired tolerance is achieved. The unstructured stability radius $r(J, R)$ obtained in Theorem~\ref{thm:unstr} is computed analogously using the same level-set framework, with the level sets defined by the largest singular value instead of the smallest singular value.
    %
    .}
    
    \item For the stability radius $r^S(J,R)$ obtained in Theorem~\ref{thm:Herm}, we used the convex programming package CVX in MATLAB for the inner optimization and \emph{fminsearch} for the outer optimization.
    
\end{itemize}

{
\begin{remark}\label{rem:place1}{
The computational approach that is used to evaluate the stability radius $r^{S_d}(J, R)$ in Theorem~\ref{thm:def} does not guarantee global optimality. More precisely, for a fixed value of $\omega$, the NEPv formulation is solved using an SCF-type iteration, which generally converges to a stationary point and hence may only yield a local minimizer of the objective function in~\eqref{eq:NEPv}. Furthermore, the outer minimization over $\omega \in \mathbb{R}$ introduces an additional layer of nonconvexity, and the use of the MATLAB function \texttt{fminsearch} does not guarantee convergence to the exact global optimum. We therefore note that the computed value in the following examples should be regarded as a heuristic estimate of $r^{S_d}(J, R)$. }
\end{remark}

\begin{remark}
    For a fixed value of $\omega$, the second term in the objective function of~\eqref{eq:def} can be expressed as ${(\sigma_{\min}(i\omega Q^{-1}-J))}^2$. Thus, if the first Rayleigh quotient term is absent, the inner optimization problem in~\eqref{eq:def} reduces to the computation of the smallest singular value of a parameter-dependent matrix, for which globally convergent level-set techniques are more suitable. However, this situation corresponds exactly to the structured perturbation setting in which perturbations are allowed only in one of the matrices $R$ or $J$. This particular case was studied in~\cite{MehMS16}.
The development of globally reliable level-set type algorithms for the general problem, where both the
Rayleigh quotient terms are present, remains an important direction for future work.
\end{remark}

The codes and data for the examples presented below are available at \url{https://gitlab.mpi-magdeburg.mpg.de/prajapati/stability-radii.git}.

\begin{example}\label{exam:rev1}
  To assess the practical quality of the proposed approximation, we consider a toy example of a DH system in staircase form that is stable but not asymptotically stable, possessing eigenvalues on the imaginary axis. 
    \begin{equation*}
    	\tilde J=\mat{ccc} J_{11} & J_{12} &\\ -J_{12}^* & J_{22} & \\ & & J_{33} \rix, \quad \tilde R=\mat{ccc} R_{11} & & \\ & 0 & \\ & & 0 \rix,
    \end{equation*}
    where $J_{11}\in \C^{2,2}$ is skew-Hermitian, $J_{12}\in \C^{2,1}$, $J_{22},J_{33}\in i\R$, and $R_{11}\in \C^{2,2}$ is Hermitian positive definite.
    We apply structured perturbations $\tilde \Delta_J, \tilde \Delta_R$ of the form
    \begin{equation*}
    	\tilde \Delta_J = \mat{ccc} 0_{2} &  &\\  &  & \epsilon_1 \\ & \epsilon_1 & \rix, \quad \tilde \Delta_R=\mat{ccc} 0_{2} & & \\ & \epsilon_2 & \\ & & \epsilon_2 \rix,
    \end{equation*}
    where $\epsilon_1\in \C$ and $\epsilon_2>0$. These perturbations shift the imaginary eigenvalues to the left half of the complex plane, making the system asymptotically stable. The structured stability radii are then computed for the asymptotically stable system
    \[
    A=P^*(\tilde J + \tilde \Delta_J - R - \tilde \Delta_R)P =: (J-R),
    \]
    where $P \in \C^{4,4}$ is a unitary matrix.
    We consider three scenarios: (i) $\epsilon_1,\epsilon_2\neq 0$, (ii) $\epsilon_1\neq 0, \epsilon_2=0$, (iii) $\epsilon_1=0, \epsilon_2\neq 0$.
    Since the perturbations that are used to stabilize the system are known explicitly, the norm of the applied perturbation provides an a priori upper bound for the corresponding structured stability radius. The results, reported in Table~\ref{tab:placeholder}, show that for all tested examples, the computed values of $r^{S_d}(J, R)$ remain consistently below these upper bounds. Although this does not constitute proof of global optimality, it indicates that the proposed computational approach produces meaningful and practically useful estimates of the structured stability radius. 
    \begin{table}[]
        \centering
        \begin{tabular}{|c|c|c|c|c|c|}
        \hline
            $\epsilon_1,\epsilon_2$ & $r(J,R)$ & $r^{S}(J,R)$ & $r^{S_i}(J,R)$ & $r^{S_d}(J,R)$ & $\sqrt{\|\tilde \Delta_J\|^2 + \|\tilde \Delta_R\|^2}$ \\ \hline
            $\epsilon_1=1+1i, \epsilon_2=0.5$ & 0.2258 & 0.6094 & 0.7835 & 0.8126 & 1.5 \\ \hline
            $\epsilon_1=1+1i, \epsilon_2=0$ & 0.1397 & 0.1774 & 0.6032 & 0.7043 & 1.4142 \\ \hline
            $\epsilon_1=0, \epsilon_2=0.5$ & 0.2272 & 0.5 & 0.5 & 0.5 & 0.5 \\ \hline
        \end{tabular}
        \caption{Unstructured and various structured stability radii for Example~\ref{exam:rev1}.}
        \label{tab:placeholder}
    \end{table}
\end{example}
}
\begin{example}\label{thm:unst}
    In this example, we generate random skew-Hermitian matrices $J\in\C^{n,n}$, Hermitian positive semidefinite matrices $R\in \C^{n,n}$, and Hermitian positive definite matrices $Q\in \C^{n,n}$, for different values of $n$, and record the various stability radii results  for the DH system $\dot x = (J-R)Qx$ in Table~\ref{tab:example1}.
    \begin{table}[h!]
        \centering
        \begin{tabular}{|c|c|c|c|c|} \hline
            $n$ & $r(J,R)$ & $r^S(J,R)$ & $r^{S_i}(J,R)$ & $r^{S_d}(J,R)$ 
            \\ \hline
            3 & 0.1501 & 0.2074 & 0.5779 & 0.6917 
            \\ \hline
            4 & 0.1309 & 0.1813 & 0.6413 & 0.7863 
            \\ \hline
            5 & 0.1959 & 0.2673 & 0.3773 & 0.4450 
            \\ \hline
            6 & 0.1469 & 0.1893 & 0.6019 & 0.7281 
            \\ \hline
            7 & 0.2911 & 0.3636 & 0.6234 & 0.7001 
            \\ \hline
            8 & 0.1571 & 0.2008 & 0.5623 & 0.6405 
            \\ \hline
            9 & 0.3759 & 0.4544 & 0.8946 & 1.0228 
            \\ \hline
        \end{tabular}
        \caption{Unstructured and various structured stability radii while perturbing $J$ and $R$.}
        \label{tab:example1}
    \end{table}
    The second column displays the unstructured stability radius $r(J,R)$ obtained in Theorem~\ref{thm:unstr}. The third and fourth column display the structured stability radii $r^S(J,R)$ and $r^{S_i}(J,R)$ obtained in Theorems~\ref{thm:Herm} and~\ref{thm:idef}, respectively. Finally, the fifth column records the structured stability radius $r^{S_d}(J,R)$ obtained in Theorem~\ref{cor:def1}. 
    Table~\ref{tab:example1} illustrates that the structured stability radii  are larger than the unstructured ones implying that {under structured perturbations the system is much more robustly stable.}
\end{example}

\begin{example}
    In this example we consider a DH system that has the structure of that in the finite element analysis of disk brake squeal from Example~\ref{example:brakesqueal} in a first-order formulation given by $\dot x = (J-R)Qx$, where
    \begin{equation}\label{disk}
        J=\mat{cc} G & (K+\frac{1}{2}N)\\ -(K+\frac{1}{2}N^*) & 0\rix, R= \mat{cc} D & \frac{1}{2}N \\ \frac{1}{2}N^* & 0 \rix, Q= \mat{cc} M & 0 \\ 0 &K \rix^{-1}.
    \end{equation}
    For $N=0$ the system $\dot x = (J-R)Qx$ is DH and hence asymptotically stable; i.e., all eigenvalues are in the open left half of the complex plane. 
    
    As test cases we randomly generate matrices $G, M, K$, and $D \in \C^{m,m}$ with $G$  skew-Hermitian, $M, K, D$ Hermitian positive semidefinite, and $N=0$, for different  $m\in\{50,60,70,80,90,100\}$. We record the different structured stability radii  in Table~\ref{tab:example2}.
    \begin{table}[ht!]
        \centering
        \begin{tabular}{|c|c|c|c|} \hline
             DH system size & $r^S(J,R)$ & $r^{S_i}(J,R)$ & $r^{S_d}(J,R)$ 
             \\
               & Theorem~\ref{thm:Herm} & Theorem~\ref{thm:idef} & Theorem~\ref{cor:def1} 
               \\ \hline
             100 & 0.0094 & 0.8027 & 1.6027 
             \\ \hline 
             120 & 0.0072 & 0.8742 & 2.0765 
             \\ \hline
             140 & 0.0074 & 0.7424 & 2.2910 
             \\ \hline
             160 & 0.0072 & 0.7341 & 1.5817 
             \\ \hline
             180 & 0.0045 & 0.7874 & 1.9112 
             \\ \hline
             200 & 0.0047 & 0.7884 & 1.7741 
             \\ \hline
        \end{tabular}
        \caption{Structured stability radii of the DH system as in \eqref{disk} with $N=0$.}
        \label{tab:example2}
    \end{table}
    Since here $R$ is Hermitian positive semidefinite and singular, by Theorem~\ref{thm:idef}, the third column of Table~\ref{tab:example2} gives a lower bound to the structured stability radius $r^{S_i}(J,R)$, which is also a lower bound to the structured stability radius $r^{S_d}(J,R)$. 
    
 Note that for large $m$ the computation times for computing the structured stability radii becomes prohibitively large, so for realistic large scale problems the optimization problem should be combined with model reduction as in 
    \cite{AliMM20,GraMQSV16}.
\end{example}

In the following example we consider the  case that we first solve a Lyapunov inequality to make the dissipative part optimal.

\begin{example}
    Consider a randomly generated asymptotically stable matrix $A\in \C^{5,5}$. We first compute a solution $\hat X > 0$ satisfying the equation
\[
-A^H \hat X - \hat X A = -2\mu\hat X,
\]
where $\mu$ is the spectral abscissa of $A$. We do this by solving the Lyapunov inequality
\[
{(A-(\mu + \epsilon)I)}^HX + X(A-(\mu + \epsilon)I) < - \epsilon I,
\]
with $\epsilon = 10^{-10}$, using the \emph{YALMIP} toolbox in MATLAB. With the solution $\hat X$ we construct $\hat J$ and $\hat R$ as described in Remark~\ref{rem:optimal}. The unstructured stability radius as in Theorem~\ref{thm:unstr} corresponding to this decomposition $\hat A=\hat J - \hat R$ was computed as $2.0774$. We then generate additional solutions $\tilde X > 0$ by solving
\begin{equation}\label{lyaz}
A^HX + XA = -Z^HZ,
\end{equation}
for randomly chosen matrices $Z$, using the MATLAB function \texttt{lyap} and observe that the resulting stability radii of the matrix $\tilde A = \tilde J-\tilde R$ corresponding to all such $\tilde X$ were consistently smaller than 2.0774. This suggests that the decomposition $\hat A=\hat J - \hat R$ corresponding to $\hat X$ yields a very robust DH representation.

We also computed the structured stability radii $r^{S_d}, r^{S_i},$ and $r^S$  for the matrix $\hat A = \hat J - \hat R$ corresponding to $\hat X$, and observed that these values were equal to the spectral abscissa $\mu$, i.e.,
\[
r^{S_d}(\hat J,\hat R)= r^{S_i}(\hat J, \hat R)= r^S(\hat J, \hat R) = \mu.
\]
Using other solutions $\tilde X$ of the Lyapunov equation~\eqref{lyaz}, the structured stability radii $r^{S_d}$ and $r^{S_i}$ were consistently larger than $\mu$. 
This suggests a potential direction for future research, i.e. to maximize  the structured stability radii over different decompositions $A=J-R$.
\end{example}

\section{Conclusions}
We have derived explicit computable formulas for various stability radii of DH systems with respect to skew-Hermitian perturbations in $J$ and three different types of structured perturbations in $R$, first, Hermitian negative semidefinite perturbations, which keep the perturbed matrix $\tilde R$ positive semidefinite; second, Hermitian perturbations such that the perturbed matrix $\tilde R$ is Hermitian positive semidefinite; and third, Hermitian perturbations. The results demonstrate that limiting perturbations to those that preserve structure results in larger robustness, as it takes substantially larger perturbations to shift an eigenvalue to the imaginary axis. The case where perturbations in all three matrices $J$, $R$, and $Q$ are considered while computing structured stability radii is still an open problem and left for future work.

{
The computational method employed in Section~\ref{sec:numerical} for calculating the different stability radii in general cannot be guaranteed  to converge to a global optimum, as noted in Remark~\ref{rem:place1}. The development of globally convergent algorithms remains a significant area for future research.
}

\section*{Author contributions}%
\addcontentsline{toc}{section}{Author contributions}
Anshul Prajapati performed writing - original draft and conceptualization. Peter Benner performed writing - review and editing, conceptualization, and funding acquisition. Volker Mehrmann and Punit Sharma performed writing - review and editing, and conceptualization.


\section*{Acknowledgments}%
\addcontentsline{toc}{section}{Acknowledgments}
Anshul Prajapati acknowledges the Max Planck Institute for support through a postdoctoral fellowship. Punit Sharma acknowledges the support of the SERB-CRG grant (CRG/2023/003221) and SERB-MATRICS grant by Government of India. 



    \addcontentsline{toc}{section}{REFERENCES}
\bibliographystyle{plainurl}
\bibliography{exampleref}

@book{AbsMS09,
  title={Optimization Algorithms on Matrix Manifolds},
  author={Absil, P.-A. and Mahony, R. and Sepulchre, R.},
  booktitle={Optimization Algorithms on Matrix Manifolds},
  year={2009},
  publisher={Princeton University Press}
}

@Article{AchAC23,
author = {Achleitner, F. and  Arnold, A. and Carlen, E. A.},
title = {The hypocoercivity index for the short time behavior of linear time-invariant {ODE} systems},
journal = {Journal of Differential Equations},
volume = {371},
pages = {83--115},
year = {2023},
issn = {0022-0396},
keywords = {Semi-dissipative ODE systems, Hypocoercivity (index)},
}

@article{AchAM21,
author={F. Achleitner and  A. Arnold and V. Mehrmann},
title={Hypocoercivity and controllability in linear semi-dissipative
{ODEs} and {DAEs}},
journal={ZAMM---Zeitschrift f{\"u}r Angewandte Mathematik und Mechanik}, 
volume={103},
pages={e202100171},
year={2021},
annote={http://arxiv.org/abs/2104.07619},
}

@article{AchAM23,
author={F. Achleitner and  A. Arnold and V. Mehrmann},
title={Hypocoercivity and hypocontractivity concepts for linear dynamical systems},
journal={Electronic Journal Linear Algebra},
volume={38},
pages={883--911},
year={2023},
}

@article{AliMM20,
	title={Approximation of stability radii for large-scale dissipative {H}amiltonian systems},
	author={Aliyev, N. and Mehrmann, V. and Mengi, E.},
	journal={Advances in Computational Mathematics},
	volume={46},
	number={1},
	pages={6},
	year={2020},
	publisher={Springer}
}

@book{Ant05,
	title={Approximation of Large-Scale Dynamical Systems},
	author={Antoulas, A. C.},
	year={2005},
	publisher={SIAM Publications},
address={Philadelphia, PA},
}

@article{AuT83,
  title={An extension of the {H}ausdorff-{T}oeplitz theorem on the numerical range},
  author={Au-Yeung, Y. H. and Tsing, N.-K.},
  journal={Proceedings of the American Mathematical Society},
  volume={89},
  number={2},
  pages={215--218},
  year={1983}
}

@article{BagGS21,
	title={Characterization of the dissipative mappings and their application to perturbations of dissipative-{H}amiltonian systems},
	author={Baghel, M. K. and Gillis, N. and Sharma, P.},
	journal={Numerical Linear Algebra with Applications},
	volume={28},
	number={6},
	pages={e2402},
	year={2021},
	publisher={Wiley Online Library}
}

@article{BaL24,
  title={Variational characterization of monotone nonlinear eigenvector problems and geometry of self-consistent field iteration},
  author={Bai, Z. and Lu, D.},
  journal={SIAM Journal on Matrix Analysis and Applications},
  volume={45},
  number={1},
  pages={84--111},
  year={2024},
  publisher={SIAM}
}

@article{BorKMS14,
	title={Structured eigenvalue backward errors of matrix pencils and polynomials with {H}ermitian and related structures},
	author={Bora, S. and Karow, M. and Mehl, C. and Sharma, P.},
	journal={SIAM Journal on Matrix Analysis and Applications},
	volume={35},
	number={2},
	pages={453--475},
	year={2014},
	publisher={SIAM}
}

@article{BoyBK89,
  title={A bisection method for computing the $H_\infty$ norm of a transfer matrix and related problems},
  author={Boyd, Stephen and Balakrishnan, Venkatarmanan and Kabamba, Pierre},
  journal={Mathematics of Control, Signals and Systems},
  volume={2},
  number={3},
  pages={207--219},
  year={1989},
  publisher={Springer}
}

@article{Bye88,
	title={A bisection method for measuring the distance of a stable matrix to the unstable matrices},
	author={Byers, R.},
	journal={SIAM Journal on Scientific and Statistical Computing},
	volume={9},
	number={5},
	pages={875--881},
	year={1988},
	publisher={SIAM}
}

@article{Cam95,
	title={Linearization of {DAE}s along trajectories},
	author={Campbell, S. L.},
	journal={ZAMP---Zeitschrift f{\"u}r angewandte Mathematik und Physik},
	volume={46},
	number={1},
	pages={70--84},
	year={1995},
	publisher={Springer}
}

@article{DalV98,
	title={On representations and integrability of mathematical structures in energy-conserving physical systems},
	author={Dalsmo, M. and van der Schaft, A.},
	journal={SIAM Journal on Control and Optimization},
	volume={37},
	number={1},
	pages={54--91},
	year={1998},
	publisher={SIAM}
}

@article{HigH92,
  title={Backward error and condition of structured linear systems},
  author={Higham, D. J. and Higham, N. J.},
  journal={SIAM Journal on Matrix Analysis and Applications},
  volume={13},
  number={1},
  pages={162--175},
  year={1992},
  publisher={SIAM}
}

@article{FreS11,
	title={A {N}ewton-based method for the calculation of the distance to instability},
	author={Freitag, M. A and Spence, A.},
	journal={Linear Algebra and its Applications},
	volume={435},
	number={12},
	pages={3189--3205},
	year={2011},
	publisher={Elsevier}
}

@article{GraMQSV16,
	title={Numerical methods for parametric model reduction in the simulation of disk brake squeal},
	author={Gr{\"a}bner, N. and Mehrmann, V. and Quraishi, S. and Schr{\"o}der, C. and Von Wagner, U.},
	journal={Zeitschrift f{\"u}r Angewandte Mathematik und Mechanik, ZAMM},
	volume={96},
	number={12},
	pages={1388--1405},
	year={2016},
	publisher={Wiley Online Library}
}

@BOOK{GolV96,
  title = {Matrix Computations},
  publisher = {Johns Hopkins Univ. Press},
  year = {1996},
  author = {G. H. Golub and C. F. {Van~Loan}},
  address = {Baltimore},
  edition = {3rd}
}

@article{HeW98,
	title={An algorithm for computing the distance to instability},
	author={He, C. and Watson, G. A.},
	journal={SIAM Journal on  Matrix Analysis and Applications},
	volume={20},
	number={1},
	pages={101--116},
	year={1998},
	publisher={SIAM}
}

@article{HinP86,
	title={Stability radii of linear systems},
	author={Hinrichsen, D. and Pritchard, A. J.},
	journal={Systems \& Control Letters},
	volume={7},
	number={1},
	pages={1--10},
	year={1986},
	publisher={Elsevier}
}

@Inbook{HinP90,
author="Hinrichsen, D.
and Pritchard, A. J.",
title="Real and Complex Stability Radii: A Survey",
bookTitle="Control of Uncertain Systems: Proceedings of an International Workshop Bremen, West Germany, June 1989",
year="1990",
publisher="Birkh{\"a}user Boston",
address="Boston, MA",
pages="119--162",
}

@book{HinP05,
	title={Mathematical Systems Theory I: Modelling, State Space Analysis, Stability and Robustness},
	author={Hinrichsen, D. and Pritchard, A. J.},
	year={2005},
	publisher={Springer Science \& Business Media}
}

@book{JacZ12,
	title = {Linear Port-Hamiltonian Systems on Infinite-dimensional Spaces},
	keywords = {EWI-22226, METIS-289681},
	author = {Jacob, B. and Zwart, H. J.},
	year = {2012},
	language = {Undefined},
	isbn = {978-3-0348-0398-4},
	series = {Operator Theory: Advances and Applications},
	publisher = {Springer},
}

@book{LanT85,
	title={The Theory of Matrices},
	author={Lancaster, P. and Tismenetsky, M.},
	series={Academic Press},
        address={New York, N.Y.},
	year={1985},
        publisher={Elsevier}
}

@article{LuPSB25,
  title={Eigenvalue Backward Errors of {R}osenbrock Systems and Optimization of Sums of {R}ayleigh Quotients},
  author={Lu, D. and Prajapati, A. and Sharma, P. and Bora, S.},
  journal={SIAM Journal on Matrix Analysis and Applications},
  volume={46},
  number={2},
  pages={1301--1327},
  year={2025},
  publisher={SIAM}
}

@article{Mar86,
	title={Efficient eigenvalue and frequency response methods applied to power system small-signal stability studies},
	author={Martins, N.},
	journal={IEEE Transactions on Power Systems},
	volume={1},
	number={1},
	pages={217--224},
	year={1986},
	publisher={IEEE}
}

@article{MarL90,
	title={Determination of suitable locations for power system stabilizers and static VAR compensators for damping electromechanical oscillations in large scale power systems},
	author={Martins, N. and Lima, L. T. G.},
	journal={IEEE Transactions on Power Systems},
	volume={5},
	number={4},
	pages={1455--1469},
	year={1990},
	publisher={IEEE}
}

@article{MarPR07,
	title={Computation of transfer function dominant zeros with applications to oscillation damping control of large power systems},
	author={Martins, N. and Pellanda, P. C. and Rommes, J.},
	journal={IEEE Transactions on Power Systems},
	volume={22},
	number={4},
	pages={1657--1664},
	year={2007},
	publisher={IEEE}
}

@article{MayA07,
	title={A framework for the solution of the generalized realization problem},
	author={Mayo, A.J and Antoulas, A. C.},
	journal={Linear Algebra and its Applications},
	volume={425},
	number={2-3},
	pages={634--662},
	year={2007},
	publisher={Elsevier}
}

@article{MehMS16,
	title={Stability radii for linear {H}amiltonian systems with dissipation under structure-preserving perturbations},
	author={Mehl, C. and Mehrmann, V. and Sharma, P.},
	journal={SIAM Journal on Matrix Analysis and Applications},
	volume={37},
	number={4},
	pages={1625--1654},
	year={2016},
	publisher={SIAM}
}

@article{MehMS17,
  title={Stability radii for real linear {H}amiltonian systems with perturbed dissipation},
  author={Mehl, C. and Mehrmann, V. and Sharma, P.},
  journal={BIT Numerical Mathematics},
  volume={57},
  pages={811--843},
  year={2017},
  publisher={Springer}
}

@article{MehMW18,
  author =	 {C. {Mehl} and V. {Mehrmann} and M. {Wojtylak}},
  title =	 {Linear algebra properties of dissipative 
      {H}amiltonian descriptor systems},
 journal= {SIAM Journal on Matrix Analysis and Applications},
volume={39},
pages={1489--1519},
 year =	 {2018},
}

@article{MehMW21,
  title={Distance problems for dissipative {H}amiltonian systems and related matrix polynomials},
  author={Mehl, C. and Mehrmann, V. and Wojtylak, M.},
  journal={Linear Algebra and its Applications},
  volume={623},
  pages={335--366},
  year={2021},
  publisher={Elsevier}
}

@article{MehU23,
author= {V. Mehrmann and B. Unger},
title={Control of port-{H}amiltonian differential-algebraic systems and
applications},
journal={Acta Numerica},
pages={395--515},
year={2023},
}

@article{MuT20,
  title={Joint numerical ranges: recent advances and applications minicourse},
  author={M{\"u}ller, V. and Tomilov, Yu.},
  journal={Concrete Operators},
  volume={7},
  number={1},
  pages={133--154},
  year={2020},
  publisher={De Gruyter Open}
}

@article{OrtVME02,
	title={Interconnection and damping assignment passivity-based control of port-controlled {H}amiltonian systems},
	author={Ortega, R. and van der Schaft, A. and Maschke, B. and Escobar, G.},
	journal={Automatica},
	volume={38},
	number={4},
	pages={585--596},
	year={2002},
	publisher={Elsevier}
}

@article{RomM09,
	title={Exploiting structure in large-scale electrical circuit and power system problems},
	author={Rommes, J. and Martins, N.},
	journal={Linear Algebra and its Applications},
	volume={431},
	number={3-4},
	pages={318--333},
	year={2009},
	publisher={Elsevier}
}

@book{Sch90,
	title={Multibody Systems Handbook},
	author={Schiehlen, W.},
	year={1990},
	publisher={Springer},
address={Heidelberg, Germany}
}

@techreport{Van84,
	title={How near is a stable matrix to an unstable matrix?},
	author={Van Loan, C.},
	journal={Contemporary Mathematics AMS},
	volume={47},
	year={1984},
	institution={Cornell University}
}

@book{Ves11,
	title={Damped Oscillations of Linear Systems: a Mathematical Introduction},
	author={Veseli{\'c}, K.},
	year={2011},
	publisher={Springer Science \& Business Media}
}

@article{YanMW07,
  title={A trust region direct constrained minimization algorithm for the {K}ohn--{S}ham equation},
  author={Yang, C. and Meza, J. C. and Wang, L.-W.},
  journal={SIAM Journal on Scientific Computing},
  volume={29},
  number={5},
  pages={1854--1875},
  year={2007},
  publisher={SIAM}
}

@article{Zh13,
  title={On optimizing the sum of the {R}ayleigh quotient and the generalized {R}ayleigh quotient on the unit sphere},
  author={Zhang, L.-H.},
  journal={Computational Optimization and Applications},
  volume={54},
  number={1},
  pages={111--139},
  year={2013},
  publisher={Springer}
}

@article{Zh14,
  title={On a self-consistent-field-like iteration for maximizing the sum of the {R}ayleigh quotients},
  author={Zhang, L.-H.},
  journal={Journal of Computational and Applied Mathematics},
  volume={257},
  pages={14--28},
  year={2014},
  publisher={Elsevier}
}

\end{document}